\newcommand{\numberseries}{\bfseries}   
\newlength{\thmtopspace}                
\newlength{\thmbotspace}                
\newlength{\thmheadspace}               
\newlength{\thmindent}                  
\newtheoremstyle{fixed bf head,slanted body}
                {\thmtopspace}{\thmbotspace}{\slshape}
                {\thmindent}{\bfseries}{.}{\thmheadspace}
                {{\numberseries \thmnumber{#2\;}}\thmname{#1}\thmnote{ (#3)}}
\newtheoremstyle{fixed bf head,upright body}
                {\thmtopspace}{\thmbotspace}{\upshape}
                {\thmindent}{\bfseries}{.}{\thmheadspace}
                {{\numberseries \thmnumber{#2\;}}\thmname{#1}\thmnote{ (#3)}}
\newtheoremstyle{numbered paragraph}
                {\thmtopspace}{\thmbotspace}{\upshape}
                {\thmindent}{\upshape}{}{\thmheadspace}
                {{\numberseries \thmnumber{#2.}}}
\theoremstyle{fixed bf head,slanted body}
\newtheorem{res}{}[section]
\newtheorem{thm}[res]{Theorem}          \newtheorem*{thm*}{Theorem}
\newtheorem{prp}[res]{Proposition}      \newtheorem*{prp*}{Proposition}
\newtheorem{cor}[res]{Corollary}        \newtheorem*{cor*}{Corollary}
\newtheorem{lem}[res]{Lemma}            \newtheorem*{lem*}{Lemma}
\theoremstyle{fixed bf head,upright body}
\newtheorem{stp}[res]{Setup}            \newtheorem*{stp*}{Setup}
\newtheorem{dfn}[res]{Definition}       \newtheorem*{dfn*}{Definition}
     \newtheorem*{con*}{Construction}
\newtheorem{obs}[res]{Observation}      \newtheorem*{obs*}{Observation}
           \newtheorem*{rmk*}{Remark}
\newtheorem{exa}[res]{Example}          \newtheorem*{exa*}{Example}
         \newtheorem*{qst*}{Question}
\theoremstyle{numbered paragraph}
\newtheorem{ipg}[res]{}
\newlength{\thmlistleft}        
\newlength{\thmlistright}       
\newlength{\thmlistpartopsep}   
\newlength{\thmlisttopsep}      
\newlength{\thmlistparsep}      
\newlength{\thmlistitemsep}     
\newcounter{eqc} 
  {\end{list}}%
\newcounter{prt}
  {\end{list}}%
\newcommand{\pgref}[1]{\ref{#1}}
\newcommand{\thmref}[2][Theorem~]{#1\pgref{thm:#2}}
\newcommand{\corref}[2][Corollary~]{#1\pgref{cor:#2}}
\newcommand{\prpref}[2][Proposition~]{#1\pgref{prp:#2}}
\newcommand{\lemref}[2][Lemma~]{#1\pgref{lem:#2}}
\newcommand{\obsref}[2][Observation~]{#1\pgref{obs:#2}}
\newcommand{\dfnref}[2][Definition~]{#1\pgref{dfn:#2}}
\newcommand{\exaref}[2][Example~]{#1\pgref{exa:#2}}
\newcommand{\secref}[2][Section~]{#1\ref{sec:#2}}
\renewcommand{\eqref}[1]{(\pgref{eq:#1})}
\def\@nobreak@#1{\mathchoice%
  {\nobreakdef@\displaystyle\f@size{#1}}%
  {\nobreakdef@\nobreakstyle\tf@size{\firstchoice@false #1}}%
  {\nobreakdef@\nobreakstyle\sf@size{\firstchoice@false #1}}%
  {\nobreakdef@\nobreakstyle\ssf@size{\firstchoice@false #1}}%
  \check@mathfonts}%
\def\nobreakdef@#1#2#3{\hbox{{%
                    \everymath{#1}%
                    \let\f@size#2\selectfont%
                    #3}}}%
\DeclareSymbolFont{usualmathcal}{OMS}{cmsy}{m}{n}
\DeclareSymbolFontAlphabet{\mathcal}{usualmathcal}
\DeclareSymbolFont{letters}{OML}{txmi}{m}{it}
\DeclareMathSymbol{\alpha}{\mathord}{letters}{"0B}
\DeclareMathSymbol{\beta}{\mathord}{letters}{"0C}
\DeclareMathSymbol{\gamma}{\mathord}{letters}{"0D}
\DeclareMathSymbol{\delta}{\mathord}{letters}{"0E}
\DeclareMathSymbol{\epsilon}{\mathord}{letters}{"0F}
\DeclareMathSymbol{\zeta}{\mathord}{letters}{"10}
\DeclareMathSymbol{\eta}{\mathord}{letters}{"11}
\DeclareMathSymbol{\theta}{\mathord}{letters}{"12}
\DeclareMathSymbol{\iota}{\mathord}{letters}{"13}
\DeclareMathSymbol{\kappa}{\mathord}{letters}{"14}
\DeclareMathSymbol{\lambda}{\mathord}{letters}{"15}
\DeclareMathSymbol{\mu}{\mathord}{letters}{"16}
\DeclareMathSymbol{\nu}{\mathord}{letters}{"17}
\DeclareMathSymbol{\xi}{\mathord}{letters}{"18}
\DeclareMathSymbol{\pi}{\mathord}{letters}{"19}
\DeclareMathSymbol{\rho}{\mathord}{letters}{"1A}
\DeclareMathSymbol{\sigma}{\mathord}{letters}{"1B}
\DeclareMathSymbol{\tau}{\mathord}{letters}{"1C}
\DeclareMathSymbol{\upsilon}{\mathord}{letters}{"1D}
\DeclareMathSymbol{\phi}{\mathord}{letters}{"1E}
\DeclareMathSymbol{\chi}{\mathord}{letters}{"1F}
\DeclareMathSymbol{\psi}{\mathord}{letters}{"20}
\DeclareMathSymbol{\omega}{\mathord}{letters}{"21}
\DeclareMathSymbol{\varepsilon}{\mathord}{letters}{"22}
\DeclareMathSymbol{\vartheta}{\mathord}{letters}{"23}
\DeclareMathSymbol{\varpi}{\mathord}{letters}{"24}
\DeclareMathSymbol{\varrho}{\mathord}{letters}{"25}
\DeclareMathSymbol{\varsigma}{\mathord}{letters}{"26}
\DeclareMathSymbol{\varphi}{\mathord}{letters}{"27}
\DeclareMathSymbol{\Gamma}{\mathord}{letters}{"00}
\DeclareMathSymbol{\Delta}{\mathord}{letters}{"01}
\DeclareMathSymbol{\Theta}{\mathord}{letters}{"02}
\DeclareMathSymbol{\Lambda}{\mathord}{letters}{"03}
\DeclareMathSymbol{\Xi}{\mathord}{letters}{"04}
\DeclareMathSymbol{\Pi}{\mathord}{letters}{"05}
\DeclareMathSymbol{\Sigma}{\mathord}{letters}{"06}
\DeclareMathSymbol{\Upsilon}{\mathord}{letters}{"07}
\DeclareMathSymbol{\Phi}{\mathord}{letters}{"08}
\DeclareMathSymbol{\Psi}{\mathord}{letters}{"09}
\DeclareMathSymbol{\Omega}{\mathord}{letters}{"0A}
\DeclareMathSymbol{\upGamma}{\mathalpha}{operators}{"00}
\DeclareMathSymbol{\upDelta}{\mathalpha}{operators}{"01}
\DeclareMathSymbol{\upTheta}{\mathalpha}{operators}{"02}
\DeclareMathSymbol{\upLambda}{\mathalpha}{operators}{"03}
\DeclareMathSymbol{\upXi}{\mathalpha}{operators}{"04}
\DeclareMathSymbol{\upPi}{\mathalpha}{operators}{"05}
\DeclareMathSymbol{\upSigma}{\mathalpha}{operators}{"06}
\DeclareMathSymbol{\upUpsilon}{\mathalpha}{operators}{"07}
\DeclareMathSymbol{\upPhi}{\mathalpha}{operators}{"08}
\DeclareMathSymbol{\upPsi}{\mathalpha}{operators}{"09}
\DeclareMathSymbol{\upOmega}{\mathalpha}{operators}{"0A}
\newcommand{\add}[2]{\mathsf{add}_{#1}\mspace{1mu}#2}
\newcommand{\MCM}{\mathsf{MCM}}
\newcommand{\proj}{\mathsf{proj}}
\newcommand{\lproj}[1][A]{#1\text{-}\mathsf{proj}}
\newcommand{\rproj}[1][A]{\mathsf{proj}\text{-}#1}
\newcommand{\Coker}[1]{\operatorname{Coker}#1}
\newcommand{\Ker}[1]{\operatorname{Ker}#1}
\newcommand{\Hom}[3][R]{\operatorname{Hom}_{#1}(#2,#3)}
\newcommand{\RHom}[3][R]{\mathbf{R}\mspace{-1.5mu}\operatorname{Hom}_{#1}(#2,#3)}
\newcommand{\Ext}[4][R]{\operatorname{Ext}_{#1}^{#2}(#3,#4)}
\renewcommand{\mod}{\mathsf{mod}}
\newcommand{\depth}[2][R]{\operatorname{depth}_{#1}#2}
\newcommand{\Ab}{\mathsf{Ab}}
\newcommand{\lMod}[1][A]{#1\text{-}\mathsf{Mod}}
\newcommand{\rMod}[1][A]{\mathsf{Mod}\text{-}#1}
\newcommand{\lmod}[1][A]{#1\text{-}\mathsf{mod}}
\newcommand{\rmod}[1][A]{\mathsf{mod}\text{-}#1}
\newcommand{\dual}{\upOmega}
\newcommand{\gldim}[1]{\operatorname{gldim}\,#1}
\newcommand{\lgldim}[1]{\operatorname{l.gldim}\,#1}
\newcommand{\rgldim}[1]{\operatorname{r.gldim}\,#1}
\begin{document}

\title{The category of maximal Cohen--Macaulay modules as a ring with several objects}

\author{Henrik Holm}

\address{University of Copenhagen, 2100 Copenhagen {\O}, Denmark}
 
\email{holm@math.ku.dk}

\urladdr{http://www.math.ku.dk/\~{}holm/}


\keywords{Global dimension; maximal Cohen--Macaulay module; ring with several objects}

\subjclass[2010]{13D05, 16E10, 18G20}


\begin{abstract}
  Over a commutative local Cohen--Macaulay ring, we view and study the category of maximal Cohen--Macaulay modules as a ring with several objects. We compute the global dimension of this category and thereby extend a result of Leuschke to the case where the ring has arbitrary (as opposed to finite) CM-type.
\end{abstract}

\maketitle


\section{Introduction}
\label{sec:Introduction}

Let $R$ be a commutative local Cohen--Macaulay ring with Krull dimension $d$. Suppose that $R$ has \emph{finite CM-type}; this means that, up to isomorphism, $R$ admits only finitely many indecomposable maximal Cohen--Macaulay modules $X_1,\ldots,X_n$. In this case, the category $\MCM$ of maximal Cohen--Macaulay $R$-modules has a \emph{representation generator}, i.e.~a module $X \in \MCM$ that contains as direct summands all indecomposable maximal Cohen--Macaulay $R$-modules (for example, $X=X_1 \oplus \cdots \oplus X_n$ would be such a module). A result by Leuschke \cite[Thm.~6]{GJL07} shows that the endomorphism ring $E=\mathrm{End}_R(X)$ has finite global dimension $\leqslant \max\{2,d\}$, and that equality holds if $d \geqslant 2$. This short paper is motivated by Leuschke's result.

If $R$ does not have finite CM-type, then $\MCM$ has no representation generator and there is \emph{a priori} no endomorphism ring $E$ to consider. However, regardless of CM-type, one can always view the entire category $\MCM$ as a ``ring with several objects'' and then study its (finitely presented) left/right ``modules'', i.e.~covariant/contravariant additive functors from $\MCM$ to abelian groups.  The category $\lmod[\MCM]$ of finitely presented left modules over the ``several object ring'' $\MCM$ is the natural object to study in the general case. Indeed, if $R$ has finite CM-type, then this category is equivalent to the category $\lmod[E]$ of finitely generated left $E$-modules, where $E$ is the endomorphism ring introduced above.

It turns out that $\lmod[\MCM]$ and $\rmod[\MCM]$, i.e.~the categories of finitely presented left and right modules over $\MCM$, are abelian with enough projectives. Thus one can naturally speak of the global dimensions of these categories; they are called the left and right global dimensions of $\MCM$, and they are denoted $\lgldim{(\MCM)}$ and $\rgldim{(\MCM)}$. We show that there is an equality $\lgldim{(\MCM)} = \rgldim{(\MCM)}$; this number is simply called the \emph{global dimension of $\MCM$}, and it is denoted by $\gldim{(\MCM)}$. Our first main result, \thmref{gldim-MCM}, shows that there are inequalities,
\begin{displaymath}
  \tag{\text{$*$}}
d \leqslant \gldim{(\MCM)} \leqslant \max\{2,d\}\;,
\end{displaymath}
and thus it extends Leuschke's theorem to the case of arbitrary CM-type. We prove the left inequality in ($*$) by showing that $\MCM$ always admits a finitely presented module with projective dimension $d$. Actually, we show that if $M$ is any Cohen--Macaulay $R$-module of dimension $t$, then
$\Ext{d-t}{M}{-}$ is a finitely presented left $\MCM$-module and $\Hom{-}{M}$ is a finitely presented right $\MCM$-module both with projective dimension equal to $d-t$. Our second main result, \thmref{01}, shows that if $d=0,1$, then the left inequality in ($*$) is an equality if and only if $R$ is regular, that is, there are equivalences:
\begin{align*}
  \gldim{(\MCM)} = 0 &\ \ \iff \ \ \text{$R$ is a field.} \\
  \gldim{(\MCM)} = 1 &\ \ \iff \ \ \text{$R$ is a discrete valuation ring.}
\end{align*}

\section{Preliminaries}

\begin{stp}
  \label{setup}
  Throughout, $(R,\mathfrak{m},k)$ is a commutative noetherian local Cohen--Macaulay ring with Krull dimension $d$. It is assumed that $R$ has a dualizing (or canonical) module $\dual$. 

The category of finitely generated projective $R$-modules is denoted $\proj$; the category of maximal Cohen--Macaulay $R$-modules (defined below) is denoted $\MCM$; and the category of all finitely generated $R$-modules is denoted $\mod$.
\end{stp}

The \emph{depth} of a finitely generated $R$-module $M \neq 0$, denoted $\depth{M}$, is the supremum of the lengths of all $M$-regular sequences $x_1,\ldots,x_n \in \mathfrak{m}$. This numerical invariant can be computed homologically as follows:
\begin{displaymath}
  \depth{M} \,=\, \inf\{\,i \in \mathbb{Z} \,|\, \Ext{i}{k}{M} \neq 0\,\}\;.
\end{displaymath}
By definition, $\depth{0} = \inf\,\emptyset = +\infty$. For a finitely generated $R$-module $M \neq 0$ one always has $\depth{M} \leqslant d$, and $M$ is called \emph{maximal Cohen--Macaulay} if equality holds. The zero module is also considered to be maximal Cohen--Macaulay; thus an arbitrary finitely generated $R$-module $M$ is maximal Cohen--Macaulay if and only if $\depth{M} \geqslant d$.

\begin{ipg}
  \label{MCM-duality}
It is well-known that the dualizing module $\dual$ gives rise to a duality on the category of maximal Cohen--Macaulay modules; more precisely, there is an equivalence of categories:
\begin{displaymath}
  \xymatrix@C=5pc{
    \MCM \ar@<0.6ex>[r]^-{\Hom{-}{\dual}} & \MCM^\mathrm{op}\;. \ar@<0.6ex>[l]^-{\Hom{-}{\dual}} 
  }
\end{displaymath}
We use the shorthand notation $(-)^\dagger$ for the functor $\Hom{-}{\dual}$. For any finitely generated $R$-module $M$ there is a canonical homomorphism $\delta_M \colon M \to M^{\dagger\dagger}$, which is natural in $M$, and because of the equivalence above, $\delta_M$ is an isomorphism if $M$ belongs to $\MCM$.
\end{ipg}

We will need the following results about depth; they are folklore and easily proved\footnote{\ One way to prove \lemref[Lemmas~]{depth-1} and \lemref[]{depth-2} are by induction on $n$ and $m$, using the behaviour of depth on short exact sequences recorded in Bruns and Herzog \cite[Prop.~1.2.9]{BruHer}.}.

\begin{lem}
  \label{lem:depth-1}
Let $n \geqslant 0$ be an integer and let $0 \to X_n \to \cdots \to X_0 \to M \to 0$ be an exact sequence of finitely generated $R$-modules. If $X_0,\ldots,X_n$ are maximal Cohen--Macaulay, then one has $\depth{M} \geqslant d-n$. \qed
\end{lem}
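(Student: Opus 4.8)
The plan is to induct on $n$, exactly as the footnote suggests, using the standard depth inequalities for a short exact sequence recorded in \cite[Prop.~1.2.9]{BruHer}. The one I will rely on is that for a short exact sequence $0 \to A \to B \to C \to 0$ of finitely generated modules there is an inequality $\depth{C} \geqslant \min\{\depth{A}-1,\,\depth{B}\}$. For the base case $n=0$ the given sequence collapses to $0 \to X_0 \to M \to 0$, so $M \cong X_0$ is maximal Cohen--Macaulay and therefore $\depth{M} \geqslant d = d-0$, as required.

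For the inductive step I would break the long exact sequence at its first syzygy. Setting $K = \Ker{(X_0 \to M)}$, which coincides with the image of $X_1 \to X_0$, produces two shorter exact sequences: the truncation $0 \to X_n \to \cdots \to X_1 \to K \to 0$, an exact sequence of length $n-1$ whose modules $X_1,\ldots,X_n$ are all maximal Cohen--Macaulay, together with the short exact sequence $0 \to K \to X_0 \to M \to 0$. Applying the induction hypothesis to the truncation gives $\depth{K} \geqslant d-(n-1) = d-n+1$, and feeding this into the depth inequality for the short exact sequence yields
\[
  \depth{M} \,\geqslant\, \min\{\depth{K}-1,\,\depth{X_0}\} \,\geqslant\, \min\{d-n,\,d\} \,=\, d-n,
\]
where $\depth{X_0} \geqslant d$ since $X_0$ is maximal Cohen--Macaulay and $d-n \leqslant d$ because $n \geqslant 0$. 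This closes the induction and establishes the claim.

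I do not expect a genuine obstacle here; the argument is entirely formal once the depth lemma is in hand. The only two points that call for a little care are the index bookkeeping when splitting off the syzygy, so that the truncated complex really does have length $n-1$ and the induction hypothesis applies to it verbatim, and the degenerate situations in which some of the modules (or $M$ itself) vanish. In the latter case one appeals to the convention $\depth{0} = +\infty$ fixed in the preliminaries, under which all the depth inequalities above continue to hold in the extended sense and the conclusion is immediate.
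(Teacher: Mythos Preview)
Your argument is correct and is precisely the approach the paper indicates: the lemma is stated without proof and the accompanying footnote suggests induction on $n$ via \cite[Prop.~1.2.9]{BruHer}, which is exactly what you carry out. There is nothing to add.
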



\begin{lem}
  \label{lem:depth-2}
Let $m \geqslant 0$ be an integer and let $\,0 \to K_{m} \to X_{m-1} \to \cdots \to X_0 \to M \to 0$ be an exact sequence of finitely generated $R$-modules. If $X_0,\ldots,X_{m-1}$ are maximal Cohen--Macaulay, then one has $\depth{K_m} \geqslant \min\{d,\depth{M}+m\}$. In particular, if $m \geqslant d$ then the $R$-module $K_m$ is maximal Cohen--Macaulay. \qed
\end{lem}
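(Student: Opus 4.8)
The plan is to induct on $m$, using the standard behaviour of depth along a short exact sequence. Recall from Bruns and Herzog \cite[Prop.~1.2.9]{BruHer} that for a short exact sequence $0 \to A \to B \to C \to 0$ of finitely generated $R$-modules one has $\depth{A} \geqslant \min\{\depth{B},\,\depth{C}+1\}$; this is the only instance of the depth lemma I will need. The base case $m=0$ is immediate, since the sequence degenerates to an isomorphism $K_0 \cong M$, and the desired bound $\depth{M} \geqslant \min\{d,\depth{M}\}$ holds trivially because $\depth{M} \leqslant d$.

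For the inductive step I would assume $m \geqslant 1$ together with the statement for shorter sequences, and split off the rightmost map. Setting $K_1 = \Ker{(X_0 \to M)}$ produces a short exact sequence $0 \to K_1 \to X_0 \to M \to 0$ alongside a truncated exact sequence $0 \to K_m \to X_{m-1} \to \cdots \to X_1 \to K_1 \to 0$ that involves only the $m-1$ maximal Cohen--Macaulay modules $X_1,\ldots,X_{m-1}$. Applying the depth lemma to the short exact sequence and using $\depth{X_0} \geqslant d$ gives $\depth{K_1} \geqslant \min\{d,\depth{M}+1\}$, while applying the induction hypothesis to the truncated sequence, with $K_1$ in the role of $M$ and $m-1$ in the role of $m$, gives $\depth{K_m} \geqslant \min\{d,\depth{K_1}+(m-1)\}$.

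It then remains to combine these two estimates. Substituting the first bound into the second and distributing the summand $m-1$ across the inner minimum yields $\depth{K_m} \geqslant \min\{d,\,d+m-1,\,\depth{M}+m\}$; since $m \geqslant 1$ the middle term is $\geqslant d$ and may be discarded, leaving precisely $\depth{K_m} \geqslant \min\{d,\depth{M}+m\}$. For the final assertion, if $m \geqslant d$ then $\depth{M}+m \geqslant m \geqslant d$ (using $\depth{M} \geqslant 0$, with the convention $\depth{0}=+\infty$), so the minimum equals $d$ and $K_m$ is maximal Cohen--Macaulay.

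I do not expect a serious obstacle here: the only points that require care are the bookkeeping with the nested minima in the final step and the degenerate cases in which some module vanishes, both of which are handled cleanly by the convention $\depth{0}=+\infty$ and the inequality $\depth{M}\leqslant d$ for nonzero $M$.
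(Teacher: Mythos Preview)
Your argument is correct and is exactly the approach the paper indicates: the lemma is stated without proof (note the \qed), and the accompanying footnote says it is proved by induction on $m$ using the behaviour of depth on short exact sequences from \cite[Prop.~1.2.9]{BruHer}, which is precisely what you carry out.
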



We will also need a few notions from relative homological algebra.

\begin{dfn}
  \label{dfn:precover}
  Let $\mathcal{A}$ be a full subcategory of a category $\mathcal{M}$. Following Enochs and Jenda \cite[def.~5.1.1]{rha} we say that $\mathcal{A}$ is \emph{precovering} (or \emph{contravariantly finite}) in $\mathcal{M}$ if every $M \in \mathcal{M}$ has an \emph{$\mathcal{A}$-precover} (or a \emph{right $\mathcal{A}$-approximation}); that is, a morphism $\pi \colon A \to M$ with $A \in \mathcal{A}$ such that every other morphism $\pi' \colon A' \to M$ with $A' \in \mathcal{A}$ factors through $\pi$, as illustrated by the following diagram:
\begin{displaymath}
  \xymatrix@R=0.8pc{
    A' \ar@{-->}[dd] \ar[dr]^-{\pi'} & {} \\
    {} & M \\
    A \ar[ur]_-{\pi} & {}
   }
\end{displaymath}
The notion of \emph{$\mathcal{A}$-preenvelopes} (or \emph{left $\mathcal{A}$-approximations}) is categorically dual to the notion defined above. The subcategory $\mathcal{A}$ is said to be \emph{preenveloping} (or \emph{covariantly finite}) in $\mathcal{M}$ if every $M \in \mathcal{M}$ has an $\mathcal{A}$-preenvelope.
\end{dfn}

The following result is a consequence of Auslander and Buchweitz's maximal Cohen--Macaulay approximations. 

\begin{thm}
  \label{thm:MCM-precover}
  Every finitely generated $R$-module has an $\MCM$-precover. 
\end{thm}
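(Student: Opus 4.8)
The plan is to derive the existence of $\MCM$-precovers directly from the maximal Cohen--Macaulay approximations of Auslander and Buchweitz, exploiting that modules of finite injective dimension are orthogonal, in positive $\operatorname{Ext}$-degrees, to the modules in $\MCM$.

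First I would record the approximation sequence. Since $R$ is Cohen--Macaulay and local with dualizing module $\dual$, the Auslander--Buchweitz theorem yields, for each finitely generated $R$-module $M$, a short exact sequence
\begin{displaymath}
  0 \longrightarrow Y \longrightarrow X \stackrel{\pi}{\longrightarrow} M \longrightarrow 0
\end{displaymath}
in which $X$ is maximal Cohen--Macaulay and $Y$ has finite injective dimension. The claim is that $\pi \colon X \to M$ is an $\MCM$-precover of $M$.

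The key point is the vanishing $\Ext{1}{X'}{Y} = 0$ for every $X' \in \MCM$. This $\operatorname{Ext}$-orthogonality between maximal Cohen--Macaulay modules and modules of finite injective dimension is part of the Auslander--Buchweitz framework, but it can also be checked by hand: by Ischebeck's theorem, $\operatorname{id}_R Y < \infty$ forces $\Ext{i}{X'}{Y} = 0$ for all $i > d - \depth{X'}$, and since $\depth{X'} \geqslant d$ for every $X' \in \MCM$, the entire range $i \geqslant 1$ is covered; the degenerate cases $Y = 0$ and $X' = 0$ are trivial.

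Granting this, the precover property is formal. Given any $X' \in \MCM$ and any morphism $\pi' \colon X' \to M$, applying $\Hom{X'}{-}$ to the approximation sequence produces the exact sequence
\begin{displaymath}
  \Hom{X'}{X} \longrightarrow \Hom{X'}{M} \longrightarrow \Ext{1}{X'}{Y} = 0,
\end{displaymath}
whose first map is $\Hom{X'}{\pi}$. Its surjectivity says exactly that $\pi'$ lifts along $\pi$, that is, $\pi' = \pi \circ \varphi$ for some $\varphi \colon X' \to X$, which is the defining property of an $\MCM$-precover in \dfnref{precover}. I expect the only substantial ingredient to be the existence of the Auslander--Buchweitz approximation sequence; the remainder is the routine ``$\operatorname{Ext}^1$-vanishing gives lifting'' manipulation. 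The one step that warrants a little care is ensuring the vanishing $\Ext{1}{X'}{Y} = 0$ holds uniformly across all maximal Cohen--Macaulay $X'$ at once, which is precisely what the Ischebeck input supplies.
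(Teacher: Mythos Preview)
Your proof is correct and follows essentially the same route as the paper: invoke the Auslander--Buchweitz approximation sequence, use Ischebeck's theorem to obtain $\Ext{1}{X'}{Y}=0$ for $X'\in\MCM$, and conclude that $\Hom{X'}{\pi}$ is surjective. The only difference is cosmetic---you spell out the depth inequality behind the Ischebeck vanishing, whereas the paper just cites it.
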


\begin{proof}
  By \cite[Thm.~A]{MAsROB89} any finitely generated $R$-module $M$ has a maximal Cohen--Macaulay approximation, that is, a short exact sequence,
\begin{displaymath}
  0 \longrightarrow I \longrightarrow X \stackrel{\pi}{\longrightarrow} M \longrightarrow 0\;,
\end{displaymath}
where $X$ is maximal Cohen--Macaulay and $I$ has finite injective dimension. A classic result of Ischebeck \cite{FIs69} (see also \cite[Exerc. 3.1.24]{BruHer}) shows that $\Ext{1}{X'}{I}=0$ for every $X'$ in $\MCM$, and hence $\Hom{X'}{\pi} \colon \Hom{X'}{X} \to \Hom{X'}{M}$ is surjective.
\end{proof}

\section{Rings with several objects}
\label{sec:Rings-with-several-objects}

The classic references for the theory of rings with several objects are Freyd \cite{abcat, Freyd65} and Mitchell \cite{Mitchell}. Below we recapitulate a few definitions and results that we need.

\vspace*{1.5ex}

A ring $A$ can be viewed as a preadditive category $\bar{A}$ with a single object $\ast$ whose endo hom-set $\Hom[\bar{A}]{\ast}{\ast}$ is $A$, and where composition  is given by ring multiplication. The category $(\bar{A},\Ab)$ of additive covariant functors from $\bar{A}$ to the category $\Ab$ of abelian groups is naturally equivalent to the category $\lMod$ of left $A$-modules. Indeed, an additive functor $F \colon \bar{A} \to \Ab$ yields a left $A$-module whose underlying abelian group is $M=F(\ast)$ and where left $A$-multiplication is given by $am = F(a)(m)$ for $a \in A=\Hom[\bar{A}]{\ast}{\ast}$ and $m \in M=F(\ast)$. Note that the preadditive category associated to the opposite ring $A^\mathrm{o}$ of $A$ is the opposite (or dual) category of $\bar{A}$; in symbols: $\overline{A^\mathrm{o}} = \bar{A}^{\,\mathrm{op}}$. It follows that the category $(\bar{A}^{\,\mathrm{op}},\Ab)$ of additive covariant functors  $\bar{A}^{\,\mathrm{op}} \to \Ab$ (which correspond to additive contravariant functors $\bar{A} \to \Ab$) is naturally equivalent to the category $\rMod$ of right $A$-modules.

These considerations justify the well-known viewpoint that any skeletally small preadditive category $\mathcal{A}$ may be thought of as a \emph{ring with several objects}. A \emph{left $\mathcal{A}$-module} is an additive covariant functor $\mathcal{A}\to\Ab$, and the category of all such is denoted by $\lMod[\mathcal{A}]$. Similarly, a \emph{right $\mathcal{A}$-module} is an additive covariant functor $\mathcal{A}^\mathrm{op}\to\Ab$ (which corresponds to an additive contravariant functor $\mathcal{A} \to \Ab$), and the category of all such is denoted $\rMod[\mathcal{A}]$.

From this point, we assume for simplicity that $\mathcal{A}$ is a skeletally small \emph{additive} category which is \emph{closed under direct summands} (i.e.~every idempotent splits). The category $\lMod[\mathcal{A}]$ is a Grothendieck category, see~\cite[prop.~5.21]{abcat}, with enough projectives. In fact, it follows from Yoneda's lemma that the representable functors $\mathcal{A}(A,-)$, where $A$ is in $\mathcal{A}$, constitute a generating set of projective objects in $\lMod[\mathcal{A}]$. A left $\mathcal{A}$-module $F$ is called \emph{finitely generated}, respectively, \emph{finitely presented} (or \emph{coherent}), if there exists an exact sequence 
$\mathcal{A}(A,-) \to F \to 0$, respectively, $\mathcal{A}(B,-) \to \mathcal{A}(A,-) \to F \to 0$, for some $A,B \in \mathcal{A}$\footnote{ If the category $\mathcal{A}$ is only assumed to be preadditive, then one would have to modify the definitions of finitely generated/presented accordingly. For example, in this case, a left $\mathcal{A}$-module $F$ is called finitely generated if there is an exact sequence of the form $\bigoplus_{i=1}^n\mathcal{A}(A_i,-) \to F \to 0$ for some $A_1,\ldots,A_n \in \mathcal{A}$.}. The category of finitely presented left $\mathcal{A}$-modules is denoted by $\lmod[\mathcal{A}]$. The Yoneda functor,
\begin{displaymath}
\mathcal{A}^\mathrm{op} \longrightarrow \lMod[\mathcal{A}]
\qquad \text{given by} \qquad A \longmapsto \mathcal{A}(A,-)\;,
\end{displaymath} 
is fully faithful, see \cite[thm.~5.36]{abcat}. Moreover, this functor identifies the objects in $\mathcal{A}$ with the finitely generated projective left $\mathcal{A}$-modules, that is, a finitely generated left $\mathcal{A}$-module is projective if and only if it is isomorphic to $\mathcal{A}(A,-)$ for some $A \in \mathcal{A}$; cf.~\cite[exerc.~5-G]{abcat}.

Here is a well-known, but important, example:

\begin{exa}
  \label{exa:proj-mod}
  Let $A$ be any ring and let $\mathcal{A}=\lproj$ be the category of all finitely generated projective left $A$-modules. In this case, the category $\lmod[\mathcal{A}] = \lmod[(\lproj)]$ is equivalent to the category $\rmod[A]$ of finitely presented right $A$-modules. Let us explain why: 

Let $F$ be a left $(\lproj)$-module, that is, an additive covariant functor $F \colon \lproj \to \Ab$. For $a \in A$ the homothety map $\chi_a \colon A \to A$ given by $b \mapsto ba$ is left $A$-linear and so it induces an endomorphism $F(\chi_a)$ of the abelian group $F(A)$. Thus $F(A)$ has a natural structure of a right $A$-module given by $xa = F(\chi_a)(x)$ for $a \in A$ and $x \in F(A)$. This right $A$-module is denoted $\mathsf{e}(F)$, and we get a functor $\mathsf{e}$, called \emph{evaluation}, displayed in the diagram below. The other functor $\mathsf{f}$ in the diagram, called \emph{functorfication}, is given by $\mathsf{f}(M) = M\otimes_A-$ (restricted to $\lproj$) for a right $A$-module $M$.
\begin{equation*}
  \xymatrix@C=4pc{
    \lMod[(\lproj)] \ar@<0.6ex>[r]^-{\mathsf{e}} & \rMod[A] \ar@<0.6ex>[l]^-{\mathsf{f}} 
  }
\end{equation*}

The functors $\mathsf{e}$ and $\mathsf{f}$ yield an equivalence of categories: For every right $A$-module $M$ there is obviously an isomorphism 
$(\mathsf{e} \circ \mathsf{f})(M) = M \otimes_A A \cong M$. We must also show that every left $(\lproj)$-module $F$ is isomorphic to $(\mathsf{f} \circ \mathsf{e})(F) = F(A) \otimes_A -$. For every $P \in \lproj$ and $y \in P$ the left $A$-linear map $\mu_P^y \colon A \to P$ given by $a \mapsto ay$ induces a group homomorphism $F(\mu_P^y) \colon F(A) \to F(P)$, and thus one has a group homomorphism $\tau_P \colon F(A) \otimes_A P \to F(P)$ given by $x \otimes y \mapsto F(\mu_P^y)(x)$. It is straightforward to verify that $\tau$ is a natural transformation. To prove that $\tau_P$ is an isomorphism for every $P \in \lproj$ it suffices, since the functors $F(A) \otimes_A -$ and $F$ are both additive, to check that $\tau_A \colon F(A) \otimes_A A \to F(A)$ is an isomorphism. However, this is evident.

It is not hard to verify that the functors $\mathsf{e}$ and $\mathsf{f}$ restrict to an equivalence between finitely presented objects, as claimed.
\end{exa}

\enlargethispage{5.5ex}

\begin{obs}
  \label{obs:six-equivalences}
  \exaref{proj-mod} shows that for any ring $A$, the category $\lmod[(\lproj)]$ is equivalent to $\rmod[A]$. Since there is an equivalence of categories,
\begin{equation*}
  \xymatrix@C=6pc{
    \lproj \ar@<0.6ex>[r]^-{\Hom[A]{-}{A}} & (\rproj)^\mathrm{op}\;, \ar@<0.6ex>[l]^-{\Hom[A^\mathrm{o}]{-}{A}} 
  }
\end{equation*}
it follows\footnote{\ Cf.~the proof of \prpref{lgldim-rgldim}.} that $\lmod[(\lproj)]$ is further equivalent to $\lmod[((\rproj)^\mathrm{op})]$, which is the same as $\rmod[(\rproj)]$. In conclusion, there are equivalences of categories:
\begin{displaymath}
  \lmod[(\lproj)] \,\simeq\, \rmod[A] \,\simeq\, \rmod[(\rproj)]\;.
\end{displaymath}
Of course, by applying this to the opposite ring $A^\mathrm{o}$ one obtains equivalences:
\begin{displaymath}
  \lmod[(\rproj)] \,\simeq\, \lmod[A] \,\simeq\, \rmod[(\lproj)]\;.
\end{displaymath}
\end{obs}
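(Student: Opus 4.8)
The plan is to extract one general principle and then feed the two concrete inputs---\exaref{proj-mod} and the displayed duality---into it. The principle is that an additive equivalence $F \colon \mathcal{A} \to \mathcal{B}$ of skeletally small additive categories induces an equivalence of the associated categories of finitely presented left modules. First I would treat \emph{all} modules: precomposition with $F$ defines an exact functor $F^{*} \colon \lMod[\mathcal{B}] \to \lMod[\mathcal{A}]$ given by $H \mapsto H \circ F$, and if $G$ is a quasi-inverse of $F$, then $G^{*}$ is quasi-inverse to $F^{*}$, since $F^{*} \circ G^{*} = (G \circ F)^{*}$ and $G^{*} \circ F^{*} = (F \circ G)^{*}$ are naturally isomorphic to the identity functors because $G \circ F$ and $F \circ G$ are. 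Thus $F^{*}$ is already an equivalence $\lMod[\mathcal{B}] \simeq \lMod[\mathcal{A}]$.

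Next I would verify that $F^{*}$ respects the finiteness conditions. The key point is that full faithfulness of $F$ supplies a natural isomorphism $\mathcal{B}(F(A),-) \circ F \cong \mathcal{A}(A,-)$ for each $A \in \mathcal{A}$, while essential surjectivity shows that every representable functor over $\mathcal{B}$ is of the form $\mathcal{B}(F(A),-)$ up to isomorphism; hence $F^{*}$ carries representables to representables. Since the representables are exactly the finitely generated projectives \cite[exerc.~5-G]{abcat} and $F^{*}$ is exact, $F^{*}$ sends a projective presentation $\mathcal{B}(B',-) \to \mathcal{B}(B,-) \to H \to 0$ to a projective presentation of $H \circ F$ over $\mathcal{A}$. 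Therefore $F^{*}$ restricts to an equivalence $\lmod[\mathcal{B}] \simeq \lmod[\mathcal{A}]$.

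With the principle in hand, both displays follow. Applying it to the displayed duality $\Hom[A]{-}{A} \colon \lproj \to (\rproj)^{\mathrm{op}}$ gives $\lmod[(\lproj)] \simeq \lmod[((\rproj)^{\mathrm{op}})]$; and by unravelling definitions a left $((\rproj)^{\mathrm{op}})$-module is precisely a right $(\rproj)$-module, so $\lmod[((\rproj)^{\mathrm{op}})] = \rmod[(\rproj)]$. Combined with the equivalence $\lmod[(\lproj)] \simeq \rmod[A]$ of \exaref{proj-mod}, this is the first display. The second display is the first one applied to the opposite ring $A^{\mathrm{o}}$, via the identifications $\lproj[A^\mathrm{o}] = \rproj$ and $\rproj[A^\mathrm{o}] = \lproj$.

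I expect the only genuine obstacle to be the bookkeeping in the second paragraph: one must confirm not merely that $F^{*}$ takes representables to representables objectwise, but that it does so naturally in the morphism $\mathcal{B}(B',-) \to \mathcal{B}(B,-)$, so that an honest projective presentation is carried to an honest projective presentation. Everything else is formal, and---as the footnote to the observation indicates---this verification is essentially the one carried out in the proof of \prpref{lgldim-rgldim}.
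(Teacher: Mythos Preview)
Your proposal is correct and follows essentially the same approach as the paper: the observation itself is little more than a sketch pointing (via the footnote) to the proof of \prpref{lgldim-rgldim}, where the argument that an equivalence induces an equivalence on finitely presented modules by sending representables to representables is carried out in the special case $\MCM \simeq \MCM^{\mathrm{op}}$. You have simply made that general principle explicit and then fed in the two inputs exactly as the paper does.
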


\vspace*{1.5ex}

In generel, the category $\lmod[\mathcal{A}]$ of finitely presented left $\mathcal{A}$-modules is an additive category with cokernels, but it is not necessarily an abelian subcategory of $\lMod[\mathcal{A}]$. A classic result of Freyd describes the categories $\mathcal{A}$ for which $\lmod[\mathcal{A}]$ is abelian. This result is stated in \thmref{mod-abelian} below, but first we explain some terminology.

A \emph{pseudo-kernel} (also called a \emph{weak kernel}) of a morphism $\beta \colon B \to C$ in $\mathcal{A}$ is a morphism $\alpha \colon A \to B$ such that the sequence
\begin{displaymath}
  \xymatrix@C=2.5pc{
    \mathcal{A}(-,A) \ar[r]^-{\mathcal{A}(-,\alpha)} &
    \mathcal{A}(-,B) \ar[r]^-{\mathcal{A}(-,\mspace{1.5mu}\beta)} &
    \mathcal{A}(-,C)
  }
\end{displaymath}
is exact in $\rMod[\mathcal{A}]$. Equivalently, one has $\beta\alpha=0$ and for every morphism $\alpha' \colon A' \to B$ with $\beta\alpha'=0$ there is a (not necessarily unique!) morphism $\theta \colon A' \to A$ with $\alpha \theta=\alpha'$.
\begin{displaymath}
  \xymatrix@C=2.5pc{
    A \ar[dr]_-{\alpha} \ar@/^2ex/[drr]^-{0} & {} & {} \\
    {} & B \ar[r]^-{\beta} & C \\
    A' \ar@{-->}[uu]^-{\theta} \ar[ur]^-{\alpha'} \ar@/_2ex/[urr]_-{0} & {} & {}     
  }
\end{displaymath}
We say that $\mathcal{A}$ \emph{has pseudo-kernels} is every morphism in $\mathcal{A}$ has a pseudo-kernel.

\emph{Pseudo-cokernels} (also called \emph{weak cokernels}) are defined dually.

\begin{obs}
  \label{obs:Existence-of-pseudo-kernels}
  Suppose that $\mathcal{A}$ is a full subcategory of an abelian category $\mathcal{M}$. 

If $\mathcal{A}$ is precovering in $\mathcal{M}$, see \dfnref{precover}, then $\mathcal{A}$ has pseudo-kernels. Indeed, given a morphism $\beta \colon B \to C$ in $\mathcal{A}$ it has a  kernel $\iota \colon M \to B$ in the abelian category $\mathcal{M}$; and it is easily verified that if $\pi \colon A \to M$ is any $\mathcal{A}$-precover of $M$, then $\alpha=\iota\pi \colon A \to B$ is a pseudo-kernel in $\mathcal{A}$ of $\beta$.

A similar argument shows that if $\mathcal{A}$ is preenveloping in $\mathcal{M}$, then $\mathcal{A}$ has pseudo-cokernels.
\end{obs}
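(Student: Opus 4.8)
The plan is to exploit the abelian structure of $\mathcal{M}$ to produce an honest kernel and then use the precovering hypothesis to pull that kernel back into $\mathcal{A}$. Concretely, given a morphism $\beta \colon B \to C$ in $\mathcal{A}$, I would first form its kernel $\iota \colon M \to B$ in the ambient abelian category $\mathcal{M}$; this exists because $\mathcal{M}$ is abelian, but note that $M$ need not lie in $\mathcal{A}$ --- this is precisely the reason a genuine kernel inside $\mathcal{A}$ may fail to exist and we must settle for a pseudo-kernel. Invoking the hypothesis that $\mathcal{A}$ is precovering, I would choose an $\mathcal{A}$-precover $\pi \colon A \to M$ (\dfnref{precover}) and set $\alpha = \iota\pi \colon A \to B$. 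The claim is then that $\alpha$ is a pseudo-kernel of $\beta$.

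To verify this I would check the two conditions in the elementary reformulation of pseudo-kernel given just before the observation. First, $\beta\alpha = \beta\iota\pi = 0$ because $\beta\iota = 0$ by the kernel property. Second, given any $\alpha' \colon A' \to B$ with $A' \in \mathcal{A}$ and $\beta\alpha' = 0$, the universal property of the kernel $\iota$ in $\mathcal{M}$ yields a (unique) morphism $g \colon A' \to M$ in $\mathcal{M}$ with $\iota g = \alpha'$. Since $A' \in \mathcal{A}$ and $\pi$ is an $\mathcal{A}$-precover of $M$, this $g$ factors as $g = \pi\theta$ for some $\theta \colon A' \to A$ in $\mathcal{A}$. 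Then $\alpha\theta = \iota\pi\theta = \iota g = \alpha'$, as required. The non-uniqueness of $\theta$ --- the precover guarantees a factorization but not a unique one --- is exactly what distinguishes a weak kernel from a genuine kernel, so it is expected rather than an obstruction.

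Finally, the assertion about pseudo-cokernels under the preenveloping hypothesis follows by the evident dualization: form the cokernel of $\beta$ in $\mathcal{M}$, take an $\mathcal{A}$-preenvelope of it, and run the argument above in $\mathcal{M}^{\mathrm{op}}$. I expect no real obstacle anywhere in the proof; the whole argument is a routine diagram chase, and the only conceptual point worth keeping in mind is that the kernel computed in $\mathcal{M}$ lies outside $\mathcal{A}$ in general, which is why passing through a precover --- and with it the loss of uniqueness --- is unavoidable.
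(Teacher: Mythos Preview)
Your proposal is correct and follows exactly the approach sketched in the paper: form the kernel $\iota \colon M \to B$ in $\mathcal{M}$, take an $\mathcal{A}$-precover $\pi \colon A \to M$, and set $\alpha = \iota\pi$. You have merely filled in the details of the phrase ``it is easily verified'' with the expected two-step check (that $\beta\alpha = 0$ and that any $\alpha'$ with $\beta\alpha' = 0$ factors through $\alpha$ via the kernel property followed by the precover property), so there is nothing to add.
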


\begin{thm}
  \label{thm:mod-abelian}
  The category $\rmod[\mathcal{A}]$ (respectively, $\lmod[\mathcal{A}]$) of finitely presented right (respectively, left) $\mathcal{A}$-modules is an abelian subcategory of 
$\rMod[\mathcal{A}]$ (respectively, $\lMod[\mathcal{A}]$) if and only if $\mathcal{A}$ has pseudo-kernels (respectively, has pseudo-cokernels).
\end{thm}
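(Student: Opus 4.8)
The plan is to prove the statement for right modules and pseudo-kernels; the left-module version with pseudo-cokernels then follows formally by applying this case to the opposite category, since $\lmod[\mathcal{A}] = \rmod[\mathcal{A}^{\mathrm{op}}]$ and a pseudo-cokernel in $\mathcal{A}$ is precisely a pseudo-kernel in $\mathcal{A}^{\mathrm{op}}$. Because $\rmod[\mathcal{A}]$ is a full additive subcategory of the abelian category $\rMod[\mathcal{A}]$, it is an abelian subcategory exactly when it is closed under the kernels and cokernels formed in $\rMod[\mathcal{A}]$. Closure under cokernels holds unconditionally: the cokernel of a morphism of finitely presented modules is again finitely presented (these cokernels agree with the ones computed in $\rMod[\mathcal{A}]$, as recorded above). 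Thus the entire content is the equivalence between closure under kernels and the existence of pseudo-kernels.

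The bridge between the two sides is a Yoneda observation. Every morphism $\mathcal{A}(-,B)\to\mathcal{A}(-,A)$ of representables has the form $\mathcal{A}(-,\beta)$ for a unique $\beta\colon B\to A$, and since idempotents split in $\mathcal{A}$ the finitely generated projectives are exactly the representables. Unwinding the definition, a morphism $\alpha\colon C\to B$ is a pseudo-kernel of $\beta$ if and only if the image of $\mathcal{A}(-,\alpha)$ equals $\Ker{\mathcal{A}(-,\beta)}$. Hence $\mathcal{A}$ has pseudo-kernels if and only if the kernel in $\rMod[\mathcal{A}]$ of every morphism between finitely generated projectives is finitely generated; I will refer to this as condition $(\star)$. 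The easy direction is then immediate: if $\rmod[\mathcal{A}]$ is an abelian subcategory, then the kernel of $\mathcal{A}(-,\beta)$ taken in $\rMod[\mathcal{A}]$ already lies in $\rmod[\mathcal{A}]$, so it is finitely presented, in particular finitely generated, and $(\star)$ holds.

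For the converse I would assume $(\star)$ and isolate one crucial lemma: the kernel of any morphism $h\colon\mathcal{A}(-,A)\to G$ from a finitely generated projective to a finitely presented module $G$ is finitely generated. Fixing a presentation $\mathcal{A}(-,B_1)\xrightarrow{d}\mathcal{A}(-,B_0)\xrightarrow{\epsilon}G\to 0$, I lift $h$ through $\epsilon$ to some $\tilde h\colon\mathcal{A}(-,A)\to\mathcal{A}(-,B_0)$ (possible as the source is projective and $\epsilon$ is epi), and then form the single morphism $\phi\colon\mathcal{A}(-,A)\oplus\mathcal{A}(-,B_1)\to\mathcal{A}(-,B_0)$, $(x,y)\mapsto\tilde h(x)-d(y)$, between finitely generated projectives. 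The first projection restricts to an epimorphism $\Ker{\phi}\twoheadrightarrow\Ker{h}$, because $x\in\Ker{h}$ holds iff $\tilde h(x)\in\operatorname{im} d=\Ker{\epsilon}$; and $\Ker{\phi}$ is finitely generated by $(\star)$. Hence $\Ker{h}$ is finitely generated.

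Closure under kernels then follows quickly. Given $f\colon F\to G$ of finitely presented modules with $K=\Ker{f}$, choose an epimorphism $e\colon P\to F$ with $P$ finitely generated projective; applying the lemma to $fe\colon P\to G$ shows $\Ker{fe}$ is finitely generated, and since $e$ carries $\Ker{fe}$ onto $K$, the module $K$ is finitely generated. Next choose an epimorphism $c\colon Q\to K$ with $Q$ finitely generated projective and apply the lemma to the composite $Q\xrightarrow{c}K\hookrightarrow F$ (a morphism into the finitely presented module $F$), obtaining that $\Ker{c}$ is finitely generated; combined with $K$ being finitely generated, this yields a finite presentation of $K$. I expect the main obstacle to be precisely the crucial lemma — more exactly, the device of packaging the lift $\tilde h$ together with the relation map $d$ into one morphism $\phi$ between finitely generated projectives, which is what lets $(\star)$, equivalently the pseudo-kernel hypothesis, be brought to bear. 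Everything else is routine bookkeeping with generators and the stability of finite generation under quotients and finite direct sums.
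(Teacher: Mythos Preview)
Your argument is correct and complete. The paper itself does not prove this theorem at all: its proof consists solely of the citation ``See Freyd \cite[thm.~1.4]{Freyd65} or Auslander and Reiten \cite[prop.~1.3]{AR86}.'' So you have supplied a genuine self-contained proof where the paper only gives a reference; your reduction to condition $(\star)$ and the packaging trick with $\phi$ is in fact essentially the standard argument one finds in those sources.
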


\begin{proof}
  See Freyd \cite[thm.~1.4]{Freyd65} or Auslander and Reiten \cite[prop.~1.3]{AR86}.
\end{proof}

\begin{exa}
  Let $A$ be a left and right noetherian ring. As $A$ is left noetherian, the category $\mathcal{M}=\lmod[A]$ of finitely presented left $A$-modules is abelian, and evidently $\mathcal{A}=\lproj$ is precovering herein. As $A$ is right noetherian, $\lproj$ is also preenveloping in $\lmod[A]$; cf.~\cite[Exa.~8.3.10]{rha}. It follows from \obsref{Existence-of-pseudo-kernels} that $\lproj$ has both pseudo-kernels and pseudo-cokernels, and therefore the categories $\rmod[(\lproj)]$ and $\lmod[(\lproj)]$ are abelian by \thmref{mod-abelian}. Of course, this also follows directly from \obsref{six-equivalences} which shows that $\rmod[(\lproj)]$ and $\lmod[(\lproj)]$ are equivalent to $\lmod[A]$ and $\rmod[A]$, respectively.
\end{exa}

Note that if $\lmod[\mathcal{A}]$ is abelian, i.e.~if $\mathcal{A}$ has pseudo-cokernels, then every finitely presented left $\mathcal{A}$-module $F$ admits a projective resolution in $\lmod[\mathcal{A}]$, that is, an exact sequence
\begin{displaymath}
  \cdots \longrightarrow \mathcal{A}(A_1,-)  \longrightarrow \mathcal{A}(A_0,-) \longrightarrow F \longrightarrow 0
\end{displaymath}
where $A_0,A_1,\ldots$ belong to $\mathcal{A}$. Thus one can naturally speak of the \emph{projective dimension} of $F$ (i.e.~the length, possibly infinite, of the shortest projective resolution of $F$ in $\lmod[\mathcal{A}]$) and of the \emph{global dimension} of the category $\lmod[\mathcal{A}]$ (i.e.~the supremum of projective dimensions of all objects in $\lmod[\mathcal{A}]$).

\begin{dfn}
  \label{dfn:gldim}
  In the case where the category $\lmod[\mathcal{A}]$ (respectively, $\rmod[\mathcal{A}]$) is abelian, then its global dimension is called the \emph{left} (respectively, \emph{right}) \emph{global dimension of $\mathcal{A}$}, and it is denoted $\lgldim{\mathcal{A}}$ (respectively, $\rgldim{\mathcal{A}}$).
\end{dfn}

Note that $\lgldim{(\mathcal{A}^\mathrm{op})}$ is the same as $\rgldim{\mathcal{A}}$ (when these numbers make sense).

\begin{exa}
  \label{exa:gldim-proj}
  Let $A$ be a left and right noetherian ring whose global dimension\footnote{\ Recall that for a ring which is both left and right noetherian, the left and right global dimensions are equal; indeed, they both coincide with the weak global dimension.} we denote $\gldim{A}$. Recall that $\gldim{A}$ can be computed as the supremum of projective dimensions of all \emph{finitely generated} (left or right) $A$-modules. It follows from \obsref{six-equivalences} that
\begin{displaymath}
  \lgldim{(\lproj)} \,=\, \gldim{A} \,=\, \rgldim{(\lproj)}\;.
\end{displaymath}
\end{exa}

\section{The global dimension of the category $\MCM$}

We are now in a position to prove the results announced in the Introduction.

\begin{exa}
  \label{exa:finite-CM-type}
  Suppose that $R$ has finite CM-type and let $X$ be any representation generator of the category $\MCM$, cf.~\secref{Introduction}. This means that $\MCM = \add{R}{X}$ where $\add{R}{X}$ denotes the category of direct summands of finite direct sums of copies of $X$.
Write $E = \mathrm{End}_R(X)$ for the endomorphism ring of $X$; this $R$-algebra is often referred to as the \emph{Auslander algebra}. Note that $X$ has a canonical structure as a left-$R$--left-$E$--bimodule ${}_{R,E}X$. It is easily verified that there is an equivalence, known as Auslander's \emph{projectivization}, given by:
\begin{equation*}
  \xymatrix@C=6pc{
    \MCM = \add{R}{X} \ar@<0.6ex>[r]^-{\Hom{X}{-}} & \rproj[E]\;. \ar@<0.6ex>[l]^-{- \otimes_E X} 
  }
\end{equation*}
It now follows from \obsref{six-equivalences} that there are equivalences of categories:
\begin{displaymath}
  \lmod[\MCM] \,\simeq\, \lmod[{(\rproj[E])}] \,\simeq\, \lmod[E]\;.
\end{displaymath}
Similarly, there is an equivalence of categories: $\rmod[\MCM] \simeq \rmod[E]$.
\end{exa}

\begin{prp}
  \label{prp:MCM-pseudo}
  The category $\MCM$ has pseudo-kernels and pseudo-cokernels. 
\end{prp}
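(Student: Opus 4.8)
The plan is to treat pseudo-kernels and pseudo-cokernels separately: I would obtain the former directly from the maximal Cohen--Macaulay approximations, and the latter for free from the self-duality on $\MCM$.

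First I would establish pseudo-kernels. Since $R$ is noetherian, the category $\mod$ of finitely generated $R$-modules is abelian, and $\MCM$ is a full subcategory of it. By \thmref{MCM-precover} every finitely generated $R$-module admits an $\MCM$-precover, so $\MCM$ is precovering in $\mod$. \obsref{Existence-of-pseudo-kernels} then yields at once that $\MCM$ has pseudo-kernels: concretely, for a morphism $\beta\colon B \to C$ in $\MCM$ one forms the kernel $\iota\colon M \to B$ in $\mod$ and composes with any $\MCM$-precover $\pi\colon A \to M$ (which exists since $M$ is finitely generated) to obtain a pseudo-kernel $\iota\pi$ of $\beta$.

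For pseudo-cokernels I would exploit the self-duality recorded in \pgref{MCM-duality}, rather than trying to show directly that $\MCM$ is preenveloping in $\mod$ (which is less immediate, since the Auslander--Buchweitz construction only co-approximates arbitrary modules by modules of finite injective dimension, not by maximal Cohen--Macaulay modules). The functor $(-)^\dagger = \Hom{-}{\dual}$ is a contravariant equivalence of $\MCM$ with itself, its own quasi-inverse, with $\delta$ furnishing the natural isomorphism $\mathrm{id} \cong (-)^{\dagger\dagger}$. A contravariant equivalence interchanges pseudo-kernels and pseudo-cokernels, so: given $\alpha\colon A \to B$ in $\MCM$, apply $(-)^\dagger$ to get $\alpha^\dagger\colon B^\dagger \to A^\dagger$, choose a pseudo-kernel $\gamma\colon K \to B^\dagger$ of $\alpha^\dagger$ (which exists by the previous paragraph), and apply $(-)^\dagger$ once more; using $\delta_B\colon B \xrightarrow{\sim} B^{\dagger\dagger}$ the resulting morphism $B \to K^\dagger$ is a pseudo-cokernel of $\alpha$. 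Equivalently, and more conceptually, $\MCM$ has pseudo-cokernels if and only if $\MCM^{\mathrm{op}}$ has pseudo-kernels, and $\MCM^{\mathrm{op}} \simeq \MCM$ has pseudo-kernels by the first part.

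I do not anticipate a serious obstacle here; the only point requiring a little care is the bookkeeping in the pseudo-cokernel argument---checking that the factorization property of a pseudo-kernel of $\alpha^\dagger$ transports, under the anti-equivalence $(-)^\dagger$ together with the naturality of $\delta$, to the universal property defining a pseudo-cokernel of $\alpha$. This verification is purely formal once one records that $(-)^\dagger$ is full, faithful, and its own quasi-inverse on $\MCM$.
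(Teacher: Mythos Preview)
Your proposal is correct and follows essentially the same approach as the paper: pseudo-kernels via \thmref{MCM-precover} and \obsref{Existence-of-pseudo-kernels}, and pseudo-cokernels by dualizing under $(-)^\dagger$ a pseudo-kernel of $\alpha^\dagger$. The only difference is cosmetic: the paper writes out the verification that $\iota^\dagger\delta_Y$ is a pseudo-cokernel through an explicit chain of isomorphisms of Hom-sequences, whereas you invoke the general principle that a contravariant self-equivalence interchanges pseudo-kernels and pseudo-cokernels---which is exactly what that chain of isomorphisms establishes.
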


\begin{proof}
  As $\MCM$ is precovering in the abelian category $\mod$, see \thmref{MCM-precover}, we get from \obsref{Existence-of-pseudo-kernels} that $\MCM$ has pseudo-kernels. To prove that $\MCM$ has pseudo-cokernels, let $\alpha \colon X \to Y$ be any homomorphism between maximal Cohen--Macaulay $R$-modules. With the notation from \ref{MCM-duality} we let $\iota \colon Z \to Y^\dagger$ be a pseudo-kernel in $\MCM$ of $\alpha^\dagger \colon Y^\dagger \to X^\dagger$. We claim that $\iota^\dagger\delta_Y \colon Y \to Z^\dagger$ is a pseudo-cokernel of $\alpha$, i.e.~that the sequence
\begin{equation}
  \label{eq:pseudo-cokernel-1}
  \xymatrix@C=4pc{
    \Hom{Z^\dagger}{U} \ar[r]^-{\Hom{\iota^\dagger\delta_Y}{U}} &
    \Hom{Y}{U} \ar[r]^-{\Hom{\alpha}{U}} &
    \Hom{X}{U}
  }
\end{equation}
is exact for every $U \in \MCM$. From the commutative diagram
\begin{displaymath}
  \xymatrix{
    X \ar[d]^-{\delta_X}_-{\cong} \ar[r]^-{\alpha} & 
    Y \ar[d]^-{\delta_Y}_-{\cong} \ar[r]^-{\iota^\dagger\delta_Y} & 
    Z^\dagger \ar@{=}[d] \\
    X^{\dagger\dagger} \ar[r]^-{\alpha^{\dagger\dagger}} & Y^{\dagger\dagger} \ar[r]^-{\iota^\dagger} & Z^\dagger \\
  }
\end{displaymath}
it follows that the sequence \eqref{pseudo-cokernel-1} is isomorphic to 
\begin{equation}
  \label{eq:pseudo-cokernel-2}
  \xymatrix@C=4pc{
    \Hom{Z^\dagger}{U} \ar[r]^-{\Hom{\iota^\dagger}{U}} &
    \Hom{Y^{\dagger\dagger}}{U} \ar[r]^-{\Hom{\alpha^{\dagger\dagger}}{U}} &
    \Hom{X^{\dagger\dagger}}{U}
  }.
\end{equation}
Recall from \ref{MCM-duality} that there is an isomorphism $U \cong U^{\dagger\dagger}$. From this fact and from the ``swap'' isomorphism \cite[(A.2.9)]{lnm}, it follows that the sequence \eqref{pseudo-cokernel-2} is isomorphic to
\begin{equation}
  \label{eq:pseudo-cokernel-3}
  \xymatrix@C=4.5pc{
    \Hom{U^\dagger}{Z^{\dagger\dagger}} \ar[r]^-{\Hom{U^\dagger}{\iota^{\dagger\dagger}}} &
    \Hom{U^\dagger}{Y^{\dagger\dagger\dagger}} \ar[r]^-{\Hom{U^\dagger}{\alpha^{\dagger\dagger\dagger}}} &
    \Hom{U^\dagger}{X^{\dagger\dagger\dagger}}
  }.
\end{equation}
Finally, the commutative diagram
\begin{displaymath}
  \xymatrix{
    Z \ar[d]^-{\delta_Z}_-{\cong} \ar[r]^-{\iota} & 
    Y^\dagger \ar[d]^-{\delta_{Y^\dagger}}_-{\cong} \ar[r]^-{\alpha^\dagger} & 
    X^\dagger \ar[d]^-{\delta_{X^\dagger}}_-{\cong} \\
    Z^{\dagger\dagger} \ar[r]^-{\iota^{\dagger\dagger}} & Y^{\dagger\dagger\dagger} \ar[r]^-{\alpha^{\dagger\dagger\dagger}} & X^{\dagger\dagger\dagger} \\
  }
\end{displaymath}
shows that the sequence \eqref{pseudo-cokernel-3} is isomorphic to
\begin{equation*}
  \xymatrix@C=3.5pc{
    \Hom{U^\dagger}{Z} \ar[r]^-{\Hom{U^\dagger}{\iota}} &
    \Hom{U^\dagger}{Y^\dagger} \ar[r]^-{\Hom{U^\dagger}{\alpha^\dagger}} &
    \Hom{U^\dagger}{X^\dagger}
  },
\end{equation*}
which is exact since $\iota \colon Z \to Y^\dagger$ is a pseudo-kernel of $\alpha^\dagger \colon Y^\dagger \to X^\dagger$.
\end{proof}

We shall find the following notation useful.

\begin{dfn}
For an $R$-module $M$, we use the notation $(M,-)$ for the left $\MCM$-module $\Hom{M}{-}|_\MCM$, and $(-,M)$ for the right $\MCM$-module $\Hom{-}{M}|_\MCM$.
\end{dfn}

\thmref{mod-abelian} and \prpref{MCM-pseudo} shows that $\lmod[\MCM]$ and $\rmod[\MCM]$ are abelian, and hence the left and right global dimensions of the category $\MCM$ are both well-defined; see \dfnref{gldim}. In fact, they are equal:

\begin{prp}
  \label{prp:lgldim-rgldim}
  The left and right global dimensions of $\MCM$ coincide, that is, 
\begin{displaymath}
  \lgldim{(\MCM)} = \rgldim{(\MCM)}\;.
\end{displaymath}
  This number is called the \emph{\emph{global dimension}} of $\MCM$, and it is denoted $\gldim{(\MCM)}$. \qed
\end{prp}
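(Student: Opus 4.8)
The plan is to deduce the equality from the duality on $\MCM$ recorded in \ref{MCM-duality}, together with the remark made just after \dfnref{gldim} that $\lgldim{(\mathcal{A}^{\mathrm{op}})} = \rgldim{\mathcal{A}}$ for any skeletally small additive category $\mathcal{A}$. The conceptual heart of the matter is that the left global dimension is invariant under equivalence of the underlying category; so I would first establish this as a general principle and then specialize to the duality functor.

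For the general principle, let $\Phi \colon \mathcal{A} \to \mathcal{B}$ be an equivalence of skeletally small additive categories with quasi-inverse $\Psi$. Precomposition with $\Phi$ gives a functor $\Phi^{*} \colon \lMod[\mathcal{B}] \to \lMod[\mathcal{A}]$, $G \mapsto G \circ \Phi$, which is itself an equivalence (a quasi-inverse being precomposition with $\Psi$) and, in particular, is exact. The key point, which is where the verification lies, is that $\Phi^{*}$ carries the representable generators to representable generators: for $B \in \mathcal{B}$ the isomorphism $B \cong \Phi\Psi(B)$ and full faithfulness of $\Phi$ yield a natural isomorphism $\mathcal{B}(B,\Phi(-)) \cong \mathcal{B}(\Phi\Psi(B),\Phi(-)) \cong \mathcal{A}(\Psi(B),-)$, so $\Phi^{*}(\mathcal{B}(B,-))$ is again representable. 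Since the representable functors are exactly the finitely generated projective objects, it follows that $\Phi^{*}$ preserves projectivity and restricts to an equivalence $\lmod[\mathcal{B}] \simeq \lmod[\mathcal{A}]$ that sends projective resolutions to projective resolutions. Hence $\Phi^{*}$ preserves projective dimensions of objects, and therefore the global dimensions $\lgldim{\mathcal{A}}$ and $\lgldim{\mathcal{B}}$ agree.

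To finish, I would apply this general principle with $\Phi = (-)^{\dagger} = \Hom{-}{\dual} \colon \MCM \to \MCM^{\mathrm{op}}$, which is an equivalence by \ref{MCM-duality}. This gives $\lgldim{(\MCM)} = \lgldim{(\MCM^{\mathrm{op}})}$, and the right-hand side equals $\rgldim{(\MCM)}$ by the remark recalled above; combining the two equalities yields the assertion. The only nontrivial step is the verification that $\Phi^{*}$ preserves the representable functors, and this is a routine application of Yoneda's lemma together with the fact that $\Phi$ is fully faithful and essentially surjective; everything else in the argument is formal.
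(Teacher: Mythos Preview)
Your proof is correct and follows essentially the same approach as the paper: both arguments use the duality $(-)^\dagger \colon \MCM \to \MCM^{\mathrm{op}}$ from \ref{MCM-duality} to show that precomposition induces an equivalence $\lmod[\MCM] \simeq \rmod[\MCM]$, with the key verification being that this equivalence carries representable functors to representable functors (the paper checks explicitly that $(X,(-)^\dagger) \cong (-,X^\dagger)$, which is your Yoneda computation specialized to this case). Your version is phrased as a general principle about equivalences of categories followed by a specialization, whereas the paper works directly with the duality, but the substance is the same.
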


\begin{proof}
  The equivalence in \ref{MCM-duality} induces an equivalence between the abelian categories of (all) left and right $\MCM$-modules given by:
\begin{equation}
  \label{eq:lMod-rMod}
  \xymatrix@C=6pc{
    \lMod[\MCM] \ar@<0.6ex>[r]^-{F \ \mapsto \ F \,\circ\, (-)^\dagger} & \rMod[\MCM]\;. \ar@<0.6ex>[l]^-{G \,\circ\, (-)^\dagger \ \mapsfrom \ G} 
  }
\end{equation}
These functors preserve finitely generated projective modules. Indeed, if $P = (X,-)$ with $X \in \MCM$ is a finitely generated projective left $\MCM$-module, then the right $\MCM$-module $P \circ (-)^\dagger = (X,(-)^\dagger)$ is isomorphic to $(-,X^\dagger)$, which is finitely generated projective. Similarly, if $Q = (-,Y)$ with $Y \in \MCM$ is a finitely generated projective right $\MCM$-module, then $Q \circ (-)^\dagger = ((-)^\dagger,Y)$ is isomorphic to $(Y^\dagger,-)$, which is finitely generated projective. 

Since the functors in \eqref{lMod-rMod} are exact and preserve finitely generated projective modules, they restrict to an equivalence between finitely presented objects, that is, $\lmod[\MCM]$ and $\rmod[\MCM]$ are equivalent. It follows that $\lmod[\MCM]$ and $\rmod[\MCM]$ have the same global dimension, i.e.~the left and right global dimensions of $\MCM$ coincide.
\end{proof}

We begin our study of $\gldim{(\MCM)}$ with a couple of easy examples.

\begin{exa}
  \label{exa:gldim-MCM-regular}
  If $R$ is regular, in which case the global dimension of $R$ is equal to $d$, then one has $\MCM = \proj$, and it follows from \exaref{gldim-proj} that $\gldim{(\MCM)} = d$.
\end{exa}

\begin{exa}
  \label{exa:gldim-MCM}
  Assume that $R$ has finite CM-type and denote the Auslander algebra by $E$. It follows from \exaref{finite-CM-type} that $\gldim{(\MCM)} = \gldim{E}$.
\end{exa}

We turn our attention to projective dimensions of representable right $\MCM$-modules.

\begin{prp}
  \label{prp:pd-of-contravariant-Hom}
  Let $M$ be a finitely generated $R$-module. Then $(-,M)$ is a finitely presented right $\MCM$-module with projective dimension equal to $d-\depth{M}$.
\end{prp}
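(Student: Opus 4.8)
The plan is to resolve the right $\MCM$-module $(-,M)$ by iterating the maximal Cohen--Macaulay approximations of \thmref{MCM-precover}, then to extract finite presentation and the upper bound $\operatorname{pd}(-,M)\leqslant d-\depth{M}$ from \lemref{depth-2}, and finally to match this with a lower bound coming from an $\Ext$-computation against the dualizing module $\dual$. Concretely, set $I_{-1}=M$ and apply \thmref{MCM-precover} repeatedly to build, for each $j\geqslant 0$, a short exact sequence $0\to I_j\to X_j\to I_{j-1}\to 0$ with $X_j\in\MCM$ and $I_j$ of finite injective dimension. Ischebeck's theorem gives $\Ext{1}{U}{I_j}=0$ for every $U\in\MCM$ (this is precisely the input to \thmref{MCM-precover}), so applying $\Hom{U}{-}$ keeps each sequence short exact; hence there are short exact sequences of right $\MCM$-modules $0\to(-,I_j)\to(-,X_j)\to(-,I_{j-1})\to 0$. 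Splicing them produces a projective resolution $\cdots\to(-,X_1)\to(-,X_0)\to(-,M)\to 0$ in $\rmod[\MCM]$ whose $j$-th syzygy is $(-,I_{j-1})$; the right-hand end $(-,X_1)\to(-,X_0)\to(-,M)\to 0$ already witnesses that $(-,M)$ is finitely presented.

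For the upper bound, write $t=\depth{M}$ and $n=d-t$. Splicing the first $n$ of the above sequences presents $I_{n-1}$ as the kernel in an exact sequence $0\to I_{n-1}\to X_{n-1}\to\cdots\to X_0\to M\to 0$ involving the $n$ maximal Cohen--Macaulay modules $X_0,\ldots,X_{n-1}$. \lemref{depth-2} then yields $\depth{I_{n-1}}\geqslant\min\{d,\,t+n\}=d$, so $I_{n-1}$ is maximal Cohen--Macaulay and the syzygy $(-,I_{n-1})$ is representable, hence projective. Therefore the resolution terminates and $\operatorname{pd}(-,M)\leqslant n$.

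For the reverse inequality I would evaluate against $\dual$. Since $\dual$ has finite injective dimension, Ischebeck's theorem gives $\Ext{i}{X_j}{\dual}=0$ for all $i>0$, so each $X_j$ is $\Hom{-}{\dual}$-acyclic; moreover the spliced complex $\cdots\to X_1\to X_0\to M\to 0$ is exact already as a sequence of $R$-modules, so it is an acyclic resolution computing $\Ext{i}{M}{\dual}$. Combining this with the Yoneda isomorphism $\operatorname{Hom}_{\rmod[\MCM]}((-,X_j),(-,\dual))\cong\Hom{X_j}{\dual}$ identifies the functor-category $\Ext$ with $R$-linear $\Ext$, giving natural isomorphisms $\operatorname{Ext}^{i}_{\rmod[\MCM]}((-,M),(-,\dual))\cong\Ext{i}{M}{\dual}$ for every $i$. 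By local duality \cite[Thm.~3.5.8]{BruHer}, $\Ext{n}{M}{\dual}$ is the Matlis dual of the local cohomology module $\mathrm{H}^{t}_{\mathfrak{m}}(M)$, which is nonzero exactly because $t=\depth{M}$. Thus $\operatorname{Ext}^{n}_{\rmod[\MCM]}((-,M),(-,\dual))\neq 0$ and $\operatorname{pd}(-,M)\geqslant n$, so $\operatorname{pd}(-,M)=d-\depth{M}$. I expect this lower bound to be the main obstacle: the work lies in recognizing the proper $\MCM$-resolution as a $\Hom{-}{\dual}$-acyclic resolution of $M$ and in invoking local duality to pin the nonvanishing to degree exactly $d-\depth{M}$ (note that although $(-,\dual)$ is itself projective, $\Ext$ \emph{into} it may well be nonzero, since $(-,M)$ is not projective).
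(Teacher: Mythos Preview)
Your argument is correct, but the lower bound is obtained by a genuinely different route than the paper's. For finite presentation and the inequality $\operatorname{pd}(-,M)\leqslant d-\depth{M}$ you and the paper do essentially the same thing: iterate $\MCM$-approximations and invoke \lemref{depth-2} to see that the $n$-th syzygy lands in $\MCM$. For the reverse inequality, however, the paper proceeds much more directly: given \emph{any} finite projective resolution
\[
0 \longrightarrow (-,Y_n) \longrightarrow \cdots \longrightarrow (-,Y_0) \longrightarrow (-,M) \longrightarrow 0
\]
in $\rmod[\MCM]$, one simply evaluates at $R\in\MCM$ to obtain an exact sequence $0\to Y_n\to\cdots\to Y_0\to M\to 0$ of $R$-modules with each $Y_i$ maximal Cohen--Macaulay, and then \lemref{depth-1} gives $\depth{M}\geqslant d-n$, i.e.\ $n\geqslant d-\depth{M}$. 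No dualizing module, no Ext computation, no local duality.

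Your approach has its own merits: the identification $\operatorname{Ext}^{i}_{\rmod[\MCM]}\!\big((-,M),(-,\dual)\big)\cong\Ext{i}{M}{\dual}$ is a pleasant byproduct, and it explains \emph{which} Ext group obstructs shortening the resolution. But it costs more: you need that the $X_j$ are $\Hom{-}{\dual}$-acyclic, Yoneda to transport the computation, and then either local duality or the standard fact $\sup\{\,i:\Ext{i}{M}{\dual}\neq 0\,\}=d-\depth{M}$ to pin down nonvanishing (note that \cite[Thm.~3.5.8]{BruHer} is often stated for complete rings, so strictly speaking you may need to pass to the completion and use faithful flatness, or instead cite a version such as \cite[Cor.~3.5.11]{BruHer} that does not require completeness). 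The paper's ``evaluate at $R$'' trick sidesteps all of this with a one-line appeal to \lemref{depth-1}; it is worth internalizing, since the same device drives the proof of the upper bound in \thmref{gldim-MCM}. Finally, a small omission: you should dispose of the case $M=0$ separately, as the paper does, since then $d-\depth{M}=-\infty$ and the iterative construction is vacuous.
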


\begin{proof}
  First we argue that $(-,M)$ is finitely presented. By \thmref{MCM-precover} there is an $\MCM$-precover $\pi \colon X \to M$, which by definition yields an epimorphism $(-,\pi) \colon (-,X) \twoheadrightarrow (-,M)$. Hence $(-,M)$ is finitely generated. As the Hom functor is left exact, the kernel of $(-,\pi)$ is the functor $(-,\Ker{\pi})$. Since $\Ker{\pi}$ is a finitely generated $R$-module, the argument above shows that $(-,\Ker{\pi})$ is finitely generated, and therefore $(-,M)$ is finitely presented.

If $M=0$, then $(-,M)$ is the zero functor which has projective dimension $d-\depth{M}=-\infty$. Thus we can assume that $M$ is non-zero such that $m:=d-\depth{M}$ is an integer. By 
successively taking $\MCM$-precovers, whose existence is guaranteed by \thmref{MCM-precover}, we construct an exact sequence of $R$-modules, $0 \to K_m \to X_{m-1} \to \cdots \to X_1 \to X_0 \to M \to 0$, where $X_0,\ldots,X_{m-1}$ are maximal Cohen--Macaulay and $K_m = \Ker{(X_{m-1} \to X_{m-2})}$, such that the sequence
\begin{equation*}
  \label{eq:proj-res-of-functor}
  \xymatrix@C=1pc{
    0 \ar[r] & (-,K_m) \ar[r] & (-,X_{m-1}) \ar[r] & \cdots \ar[r] & 
    (-,X_1) \ar[r] & 
    (-,X_0) \ar[r] & (-,M) \ar[r] & 0
  }
\end{equation*}
in $\rmod[\MCM]$ is exact. \lemref{depth-2} show that $\depth{K_m} \geqslant \min\{d,\depth{M}+m\} = d$, and hence $K_m$ is maximal Cohen--Macaulay. Thus exactness of the sequence displayed above shows that the projective dimension of $(-,M)$ is $\leqslant m$.

To prove that the projective dimension of $(-,M)$ is $\geqslant m$, we must show that if
\begin{displaymath}
  \xymatrix@C=1.5pc{
    0 \ar[r] & 
    (-,Y_{n}) \ar[r]^-{\tau_{n}} & \cdots \ar[r] & 
    (-,Y_1) \ar[r]^-{\tau_1} & 
    (-,Y_0) \ar[r]^-{\tau_0} & (-,M) \ar[r] & 0
  }
\end{displaymath}
is any exact sequence in $\rmod[\MCM]$, where $Y_0,\ldots,Y_n$ are maximal Cohen--Macaulay, then $n \geqslant m$. By Yoneda's lemma, each $\tau_i$ has the form $\tau_i = (-,\beta_i)$ for some  homomorphism $\beta_i \colon Y_i \to Y_{i-1}$ when $1 \leqslant i \leqslant n$ and $\beta_0 \colon Y_0 \to M$. By evaluating the sequence on the maximal Cohen--Macaulay module $R$, it follows that the sequence of $R$-modules,
\begin{displaymath}
  \xymatrix@C=1.5pc{
    0 \ar[r] & Y_{n} \ar[r]^-{\beta_{n}} & \cdots \ar[r] & 
    Y_1 \ar[r]^-{\beta_1} & 
    Y_0 \ar[r]^-{\beta_0} & M \ar[r] & 0
  },
\end{displaymath}
is exact. Thus \lemref{depth-1} yields $\depth{M} \geqslant d-n$, that is, $n \geqslant m$.
\end{proof}

In contrast to what is the case for representable right $\MCM$-modules, representable left $\MCM$-modules are ``often'' zero. For example, if $d>0$ then $\Hom{k}{X}=0$ for every maximal Cohen--Macaulay $R$-module $X$, and hence $(k,-)$ is the zero functor. In particular, the 
projective dimension of a representable left $\MCM$-module is typically not very interesting. \prpref{Ext} below gives concrete examples of finitely presented left $\MCM$-modules that do have interesting projective dimension.

\begin{lem}
  \label{lem:natural-iso}
  For every Cohen--Macaulay $R$-module $M$ of dimension $t$ there is the following natural isomorphism of functors $\MCM \to \Ab$,
  \begin{align*}
    \Hom{(-)^\dagger}{\Ext{d-t}{M}{\dual}} \,\cong\, \Ext{d-t}{M}{-}\;.
  \end{align*}
\end{lem}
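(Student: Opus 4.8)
The plan is to prove the isomorphism at the level of complexes in the bounded derived category and then extract it by taking a single cohomology group. Write $N = \Ext{d-t}{M}{\dual}$, so that the assertion reads $\Hom{U^\dagger}{N} \cong \Ext{d-t}{M}{U}$, naturally in $U \in \MCM$. The starting point is that for $U$ maximal Cohen--Macaulay the dual $U^\dagger$ is again maximal Cohen--Macaulay, so $\Ext{i}{U^\dagger}{\dual} = 0$ for $i > 0$, and the biduality isomorphism $\delta_U \colon U \to U^{\dagger\dagger}$ from \ref{MCM-duality} identifies $U$ with $\RHom{U^\dagger}{\dual}$ in the derived category, naturally in $U$.

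Substituting this expression for $U$ and applying the swap isomorphism \cite[(A.2.9)]{lnm} gives the chain of natural isomorphisms
\begin{displaymath}
  \RHom{M}{U} \,\cong\, \RHom{M}{\RHom{U^\dagger}{\dual}} \,\cong\, \RHom{U^\dagger}{\RHom{M}{\dual}}\,;
\end{displaymath}
since $\dual$ has finite injective dimension, every complex here lies in the bounded derived category of finitely generated modules, so the swap applies. Now I would invoke the canonical-module theory for modules of non-maximal dimension: because $M$ is Cohen--Macaulay of dimension $t$, one has $\Ext{i}{M}{\dual} = 0$ for $i \neq d-t$ (e.g.\ Bruns--Herzog \cite[Thm.~3.3.10]{BruHer}), so $\RHom{M}{\dual}$ is concentrated in cohomological degree $d-t$, where it equals $N$. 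Thus $\RHom{M}{\dual} \cong N[t-d]$ and the chain becomes $\RHom{M}{U} \cong \RHom{U^\dagger}{N}[t-d]$. Taking $(d-t)$-th cohomology yields $\Ext{d-t}{M}{U} \cong H^0(\RHom{U^\dagger}{N}) = \Hom{U^\dagger}{N}$, which is the claim; naturality in $U$ is inherited from each isomorphism in the chain (note $U \mapsto U^\dagger$ is contravariant and $\RHom{-}{N}$ is contravariant, so the composite $U \mapsto \Hom{U^\dagger}{N}$ is covariant), giving an isomorphism of functors $\MCM \to \Ab$.

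The only genuine input beyond formal manipulation is the single-degree concentration $\RHom{M}{\dual} \cong N[t-d]$: this is precisely where the Cohen--Macaulayness of $M$ and the value of its dimension enter, and it must be paired with the boundedness that legitimizes the swap. Everything else --- the biduality of \ref{MCM-duality}, exactness of the cohomology functor, and the shift bookkeeping --- is routine. As a consistency check, when $t = d$ the module $M$ is itself maximal Cohen--Macaulay, $N = M^\dagger$, and the statement collapses to the defining adjunction $\Hom{U^\dagger}{M^\dagger} \cong \Hom{M}{U}$ of the duality $(-)^\dagger$.
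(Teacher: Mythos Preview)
Your proof is correct and follows essentially the same route as the paper: both arguments combine the single-degree concentration $\RHom{M}{\dual}\cong\Ext{d-t}{M}{\dual}[t-d]$ from \cite[Thm.~3.3.10]{BruHer}, derived biduality for maximal Cohen--Macaulay modules, and the swap isomorphism, then read off the claim by taking cohomology. The only quibble is the citation: the swap you invoke is the \emph{derived} swap, which in \cite{lnm} is (A.4.22), not the module-level (A.2.9).
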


\begin{proof}
  Since $M$ is Cohen--Macaulay of dimension $t$ one has $\Ext{i}{M}{\dual} = 0$ for $i \neq d-t$; see \cite[Thm.~3.3.10]{BruHer}. Thus there is an isomorphism in the derived category of $R$,
\begin{displaymath}
  \Ext{d-t}{M}{\dual} \cong \upSigma^{d-t}\RHom{M}{\dual}\;.
\end{displaymath}
In particular, there is an isomorphism $X^\dagger=\Hom{X}{\dual} \cong \RHom{X}{\dual}$ for $X \in \MCM$. This explains the first isomorphism below. The second isomorphism is trivial, the third one is by ``swap'' \cite[(A.4.22)]{lnm}, and the fourth one follows as $\dual$ is a dualizing $R$-module.
\begin{align*}
  \RHom{X^\dagger}{\Ext{d-t}{M}{\dual}} 
  &\cong \RHom{\RHom{X}{\dual}}{\upSigma^{d-t}\RHom{M}{\dual}} \\
  &\cong \upSigma^{d-t}\RHom{\RHom{X}{\dual}}{\RHom{M}{\dual}} \\
  &\cong \upSigma^{d-t}\RHom{M}{\RHom{\RHom{X}{\dual}}{\dual}} \\
  &\cong \upSigma^{d-t}\RHom{M}{X}\;.
\end{align*}
The assertion now follows by taking the zero'th homology group $\mathrm{H}_0$.
\end{proof}

\begin{prp}
  \label{prp:Ext}
  If $M$ is any Cohen--Macaulay $R$-module of dimension $t$, then the~functor $\Ext{d-t}{M}{-}|_\MCM$ is a finitely presented left $\MCM$-module with projective dimension equal to $d-t$.
\end{prp}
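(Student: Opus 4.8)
The plan is to reduce the assertion to \prpref{pd-of-contravariant-Hom} by transporting it across the duality equivalence \eqref{lMod-rMod}, using the natural isomorphism established in \lemref{natural-iso}. Write $N = \Ext{d-t}{M}{\dual}$ for the canonical dual of $M$. The first input I would record is the standard fact, due to the theory of canonical modules, that $N$ is again a Cohen--Macaulay $R$-module of dimension $t$; see Bruns and Herzog \cite[Thm.~3.3.10]{BruHer}. Consequently $\depth{N} = t$.

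Next I would identify the functor in question. The inverse of the equivalence \eqref{lMod-rMod} sends a right $\MCM$-module $G$ to the left $\MCM$-module $G \circ (-)^\dagger$, whose value at $X \in \MCM$ is $G(X^\dagger)$. Applying this to the representable-type right module $G = (-,N) = \Hom{-}{N}|_\MCM$ yields the left $\MCM$-module $X \mapsto \Hom{X^\dagger}{N}$, i.e.~the functor $\Hom{(-)^\dagger}{N}|_\MCM$. By \lemref{natural-iso} this functor is naturally isomorphic to $\Ext{d-t}{M}{-}|_\MCM$. Thus $\Ext{d-t}{M}{-}|_\MCM$ is exactly the image of $(-,N)$ under the inverse of \eqref{lMod-rMod}.

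Now I would invoke the properties of this equivalence that were established in the proof of \prpref{lgldim-rgldim}: it is exact and carries finitely generated projectives to finitely generated projectives, hence it restricts to an exact equivalence $\rmod[\MCM] \simeq \lmod[\MCM]$ that preserves projective resolutions and therefore projective dimension. Since \prpref{pd-of-contravariant-Hom} shows that $(-,N)$ is a finitely presented right $\MCM$-module, its image $\Ext{d-t}{M}{-}|_\MCM$ is a finitely presented left $\MCM$-module of the same projective dimension. Finally, \prpref{pd-of-contravariant-Hom} evaluates that projective dimension as $d - \depth{N} = d - t$, which is the desired conclusion.

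There is no serious obstacle once \lemref{natural-iso} is in hand; the proof is almost entirely a matter of correct bookkeeping. The two points deserving care are, first, tracking that in $(-,N) \circ (-)^\dagger$ the duality is applied in the first variable, so that this right module becomes precisely $\Hom{(-)^\dagger}{N}|_\MCM$ and matches \lemref{natural-iso}; and second, the clean invocation of the canonical-module fact guaranteeing that $N$ is Cohen--Macaulay of dimension $t$, which is what converts the abstract equality of projective dimensions into the explicit value $d - t$.
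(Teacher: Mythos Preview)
Your proof is correct and follows essentially the same approach as the paper: both invoke \cite[Thm.~3.3.10]{BruHer} to see that $N=\Ext{d-t}{M}{\dual}$ is Cohen--Macaulay of dimension $t$, apply \prpref{pd-of-contravariant-Hom} to compute the projective dimension of $(-,N)$, transport across the equivalence from the proof of \prpref{lgldim-rgldim}, and identify the result via \lemref{natural-iso}. The only difference is the order of exposition.
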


\begin{proof}
  As $M$ is Cohen--Macaulay of dimension $t$, so is \smash{$\Ext{d-t}{M}{\dual}$}; see \cite[Thm.~3.3.10]{BruHer}. \prpref{pd-of-contravariant-Hom} shows that \smash{$\Hom{-}{\Ext{d-t}{M}{\dual}}|_\MCM$} is a finitely presented right $\MCM$-module with projective dimension equal to $d-t$. The proof of \prpref[Prop.~]{lgldim-rgldim} now gives that 
\begin{displaymath}
  \Hom{(-)^\dagger}{\Ext{d-t}{M}{\dual}}|_\MCM
\end{displaymath}
is a finitely presented left $\MCM$-module with projective dimension $d-t$, and   \lemref{natural-iso} finishes the proof. 
\end{proof}

\begin{thm}
  \label{thm:gldim-MCM}
  The category $\MCM$ has finite global dimension. In fact, one has 
\begin{displaymath}
d \leqslant \gldim{(\MCM)} \leqslant \max\{2,d\}\;.
\end{displaymath}
In particular, if $d \geqslant 2$ then there is an equality $\gldim{(\MCM)}=d$.
\end{thm}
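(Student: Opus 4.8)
The plan is to establish the two inequalities separately; by \prpref{lgldim-rgldim} it suffices to work with right $\MCM$-modules throughout, so I would compute $\rgldim{(\MCM)}$. The lower bound is immediate from the machinery already in place: applying \prpref{pd-of-contravariant-Hom} to the residue field $M=k$, which is nonzero with $\depth{k}=0$, produces a finitely presented right $\MCM$-module $(-,k)$ of projective dimension $d-0=d$, so $\rgldim{(\MCM)}\geqslant d$. (Equivalently one may invoke \prpref{Ext} with the Cohen--Macaulay module $k$ of dimension $t=0$.)

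For the upper bound I would take an arbitrary finitely presented right $\MCM$-module $F$ and a projective presentation $(-,Y_1)\xrightarrow{(-,\beta_0)}(-,Y_0)\to F\to 0$, where $\beta_0\colon Y_1\to Y_0$ is a homomorphism in $\MCM$ by Yoneda. The key structural observation to record is that the second syzygy of $F$ is again representable by an $R$-module: a homomorphism $g\colon U\to Y_1$ satisfies $\beta_0 g=0$ exactly when it factors uniquely through the inclusion $\Ker{\beta_0}\hookrightarrow Y_1$, whence one obtains a natural isomorphism $\Ker{(-,\beta_0)}\cong(-,A)$ with $A:=\Ker{\beta_0}$. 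Splitting the presentation into the two short exact sequences $0\to\operatorname{im}(-,\beta_0)\to(-,Y_0)\to F\to 0$ and $0\to(-,A)\to(-,Y_1)\to\operatorname{im}(-,\beta_0)\to 0$, and using that $(-,Y_0)$ and $(-,Y_1)$ are projective, then gives $\operatorname{pd} F\leqslant\operatorname{pd}(-,A)+2$.

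It remains to bound $\operatorname{pd}(-,A)$. Here I would apply \lemref{depth-2} to the four-term exact sequence $0\to A\to Y_1\xrightarrow{\beta_0}Y_0\to C\to 0$, where $C:=\Coker{\beta_0}$ and $Y_0,Y_1\in\MCM$; taking $m=2$ yields $\depth{A}\geqslant\min\{d,\depth{C}+2\}\geqslant\min\{d,2\}$, the convention $\depth{0}=+\infty$ covering the case $C=0$. Feeding this into \prpref{pd-of-contravariant-Hom} gives $\operatorname{pd}(-,A)=d-\depth{A}\leqslant d-\min\{d,2\}=\max\{0,d-2\}$, and hence $\operatorname{pd} F\leqslant 2+\max\{0,d-2\}=\max\{2,d\}$. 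As $F$ was arbitrary, $\rgldim{(\MCM)}\leqslant\max\{2,d\}$, which together with the lower bound proves the displayed inequalities; the final assertion is then immediate, since $\max\{2,d\}=d$ once $d\geqslant 2$.

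The step I expect to be the main obstacle is getting the two contributions to combine correctly. The naive estimate $\operatorname{pd} F\leqslant\operatorname{pd}(-,A)+2$ together with the crude bound $\operatorname{pd}(-,A)\leqslant d$ only yields $d+2$; the whole point is that the second syzygy $A=\Ker{\beta_0}$ is forced to have depth at least $\min\{d,2\}$, because it is a ``second syzygy over $R$'' squeezed between the maximal Cohen--Macaulay modules $Y_1$ and $Y_0$, so that this extra depth exactly cancels the $+2$. Verifying the natural isomorphism $\Ker{(-,\beta_0)}\cong(-,A)$ and the depth estimate are the two points that genuinely require care; the remainder is formal syzygy bookkeeping.
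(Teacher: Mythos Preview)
Your argument is correct. The lower bound is identical to the paper's. For the upper bound you take a genuinely different, though closely related, route: the paper simply writes down a partial projective resolution of length $m=\max\{2,d\}$, evaluates it at $R$ to obtain an exact sequence $X_{m-1}\to\cdots\to X_0$ of maximal Cohen--Macaulay modules, observes via \lemref{depth-2} that $X_m=\Ker{\alpha_{m-1}}$ is again maximal Cohen--Macaulay, and concludes by left exactness of Hom that $0\to(-,X_m)\to(-,X_{m-1})\to\cdots\to(-,X_0)\to G\to 0$ is a projective resolution of length $\leqslant m$. You instead stop the resolution after two steps, identify the second syzygy as $(-,A)$ with $A=\Ker{\beta_0}$, and then invoke \prpref{pd-of-contravariant-Hom} as a black box to finish. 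The advantage of your decomposition is that it reuses the precise formula $\operatorname{pd}(-,A)=d-\depth{A}$ and makes transparent why the ``$+2$'' is exactly cancelled by the gain in depth; the advantage of the paper's approach is that it is self-contained and does not need the $\MCM$-precovers hidden inside the proof of \prpref{pd-of-contravariant-Hom}---only the depth lemma and left exactness of Hom are used. Both arguments ultimately rest on the same observation that kernels in a functor resolution are representable and that \lemref{depth-2} controls their depth.
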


\begin{proof}
  The residue field $k$ of $R$ is a finitely generated $R$-module with depth $0$. Thus \prpref{pd-of-contravariant-Hom} shows that $(-,k)$ is finitely presented right $\MCM$-module with projective dimension $d$. Consequently, we must have $d \leqslant \gldim{(\MCM)}$.

  To prove the other inequality, set $m = \max\{2,d\}$ and let $G$ be any finitely presented right $\MCM$-module. Take any exact sequence in $\rmod[\MCM]$, 
\begin{equation}
  \label{eq:resolution}
  \xymatrix@C=1.5pc{
    (-,X_{m-1}) \ar[r]^-{\tau_{m-1}} & \cdots \ar[r] & 
    (-,X_1) \ar[r]^-{\tau_1} & 
    (-,X_0) \ar[r]^-{\varepsilon} & G \ar[r] & 0
  },
\end{equation}
where $X_0,X_1,\ldots,X_{m-1}$ are in $\MCM$. Note that since $m \geqslant 2$ there is at least one ``$\tau$'' in this sequence. By Yoneda's lemma, every $\tau_i$ has the form $\tau_i = (-,\alpha_i)$ for some homomorphism $\alpha_i \colon X_i \to X_{i-1}$. By evaluating \eqref{resolution} on $R$, we get an exact sequence of $R$-modules:
\begin{displaymath}
  \xymatrix@C=1.5pc{
    X_{m-1} \ar[r]^-{\alpha_{m-1}} & \cdots \ar[r] & 
    X_1 \ar[r]^-{\alpha_1} & 
    X_0
  }.
\end{displaymath}
Since $m \geqslant d$ it follows from \lemref{depth-2} that the module $X_m = \Ker{\alpha_{m-1}}$ is maximal Cohen--Macaulay. As the Hom functor is left exact, we see that $0 \to (-,X_m) \to (-,X_{m-1})$ is exact. This sequence, together with \eqref{resolution}, shows that $G$ has projective dimension $\leqslant m$.
\end{proof}

In view of \exaref{gldim-MCM} and \thmref{gldim-MCM}, we immediate get the following result due to Leuschke \cite[Thm.~6]{GJL07}.

\begin{cor}
  \label{cor:Leuschke}
  Assume that $R$ has finite CM-type and let $X$ be any representation generator of $\MCM$ with Auslander algebra $E = \mathrm{End}_R(X)$. There are inequalities,
\begin{displaymath}
  d \leqslant \gldim{E} \leqslant \max\{2,d\}\;.
\end{displaymath}
In particular, if $d \geqslant 2$ then there is an equality $\gldim{E}=d$. \qed
\end{cor}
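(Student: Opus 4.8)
The plan is to obtain this corollary as an immediate translation of the two preceding results, since under the finite CM-type hypothesis the global dimension of the category $\MCM$ coincides with the ordinary global dimension of the Auslander algebra $E$. First I would invoke \exaref{gldim-MCM}, which records the identity $\gldim{(\MCM)} = \gldim{E}$; this in turn rests on the projectivization equivalence $\MCM \simeq \rproj[E]$ from \exaref{finite-CM-type} together with \obsref{six-equivalences}, yielding an equivalence $\lmod[\MCM] \simeq \lmod[E]$ of abelian categories that preserves projective objects and hence projective dimensions.

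With this identification in hand, I would simply feed the bounds of \thmref{gldim-MCM} through it. That theorem gives $d \leqslant \gldim{(\MCM)} \leqslant \max\{2,d\}$, and substituting $\gldim{(\MCM)} = \gldim{E}$ produces the asserted inequalities $d \leqslant \gldim{E} \leqslant \max\{2,d\}$ verbatim. The final ``in particular'' clause requires no separate argument: when $d \geqslant 2$ one has $\max\{2,d\} = d$, so the two-sided bound collapses to the equality $\gldim{E} = d$.

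There is essentially no obstacle to overcome at this point; all of the substantive work has already been carried out in the general setting of arbitrary CM-type. In particular, the lower bound originates from the representable right $\MCM$-module $(-,k)$, which has projective dimension $d$ by \prpref{pd-of-contravariant-Hom}, while the upper bound comes from the maximal Cohen--Macaulay approximation argument in the proof of \thmref{gldim-MCM}. The only step meriting a moment's care is the equality $\gldim{(\MCM)} = \gldim{E}$, i.e.\ verifying that the categorical equivalence $\lmod[\MCM] \simeq \lmod[E]$ carries projective resolutions to projective resolutions; but this is precisely the content of \exaref{gldim-MCM}, so nothing further is required.
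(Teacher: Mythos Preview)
Your proposal is correct and matches the paper's own argument exactly: the paper simply states that the corollary follows ``in view of \exaref{gldim-MCM} and \thmref{gldim-MCM}'' and appends a \qed. No additional work is done beyond the substitution $\gldim{(\MCM)} = \gldim{E}$ into the bounds of \thmref{gldim-MCM}, precisely as you describe.
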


\begin{exa}
  If $d=0$ then $\MCM = \mod$ and hence $\gldim{(\MCM)} = \gldim{(\mod)}$. Since $\mod$ is abelian, it is a well-known result of Auslander \cite{MAs66} that the latter number must be either $0$ or $2$ (surprisingly, it can not be $1$). Thus one of the inequalities in \thmref{gldim-MCM} is actually an equality. If, for example, $R=k[x]/(x^2)$ where $k$ is a field, then $\gldim{(\mod)}=2$.
\end{exa}

\begin{exa}
  If $d=1$ then \thmref{gldim-MCM} shows that $\gldim{(\MCM)} = 1,2$. The $1$-dimen\-sion\-al Cohen--Macaulay ring \mbox{$R=k[\mspace{-2.7mu}[x,y]\mspace{-2.7mu}]/(x^2)$} does not have finite CM-type\footnote{\ See Buchweitz, Greuel, and Schreyer \cite[Prop.~4.1]{BGS-87} for a complete list of the indecomposable maximal Cohen--Macaulay modules over this ring.}, and since it is not regular, it follows from \thmref{01} below that $\gldim{(\MCM)}=2$. 
\end{exa}

Recall that in any abelian category with enough projectives (such as $\rmod[\mathcal{A}]$ in the case where $\mathcal{A}$ has pseudo-kernels) one can well-define and compute Ext in the usual way.

\begin{lem}
  \label{lem:Ext2}
  Assume that $\mathcal{A}$ is precovering in an abelian category $\mathcal{M}$ (in which case the category $\rmod[\mathcal{A}]$ is abelian by \obsref{Existence-of-pseudo-kernels} and \thmref{mod-abelian}). Let 
\begin{displaymath}
0 \longrightarrow A' \stackrel{\alpha'}{\longrightarrow} A \stackrel{\alpha}{\longrightarrow} A'' 
\end{displaymath}
be an exact sequence in $\mathcal{M}$ where $A$, $A'$, $A''$ belong to  $\mathcal{A}$. Consider the finitely presented right $\mathcal{A}$-module $G = \Coker{\mathcal{A}(-,\alpha)}$, that is, $G$ is defined by exactness of the sequence
\begin{displaymath}
  \xymatrix{
    \mathcal{A}(-,A) \ar[r]^-{\mathcal{A}(-,\alpha)} & \mathcal{A}(-,A'') \ar[r] & G \ar[r] & 0
  }.
\end{displaymath}
For any finitely presented right $\mathcal{A}$-module $H$ there is an isomorphism of abelian groups,
\begin{displaymath}
  \Ext[{\rmod[\mathcal{A}]}]{2}{G}{H} \,\cong\, \Coker{H(\alpha')}\;.
\end{displaymath}
\end{lem}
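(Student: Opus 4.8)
The plan is to realize the given exact sequence as the projective part of a length-two projective resolution of $G$, and then to read off $\Ext[{\rmod[\mathcal{A}]}]{2}{G}{H}$ from it by means of Yoneda's lemma.

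First I would note that since $\mathcal{A}$ is a \emph{full} subcategory of the abelian category $\mathcal{M}$, the functor $\mathcal{M}(B,-)$ is left exact for every $B \in \mathcal{A}$. Applying it to the exact sequence $0 \to A' \xrightarrow{\alpha'} A \xrightarrow{\alpha} A''$ gives, for each such $B$, an exact sequence of abelian groups $0 \to \mathcal{A}(B,A') \to \mathcal{A}(B,A) \to \mathcal{A}(B,A'')$; as this is natural in $B$, one obtains an exact sequence of representable right $\mathcal{A}$-modules
\[
  0 \longrightarrow \mathcal{A}(-,A') \xrightarrow{\mathcal{A}(-,\alpha')} \mathcal{A}(-,A) \xrightarrow{\mathcal{A}(-,\alpha)} \mathcal{A}(-,A'').
\]
Splicing this with the defining presentation $\mathcal{A}(-,A) \to \mathcal{A}(-,A'') \to G \to 0$ of $G$ yields
\[
  0 \longrightarrow \mathcal{A}(-,A') \longrightarrow \mathcal{A}(-,A) \longrightarrow \mathcal{A}(-,A'') \longrightarrow G \longrightarrow 0.
\]
Because each representable functor is a finitely generated projective object of $\rmod[\mathcal{A}]$ — which is an abelian subcategory of $\rMod[\mathcal{A}]$ under the stated hypotheses, so that exactness may be tested there — this is a genuine projective resolution of $G$ in $\rmod[\mathcal{A}]$, of length two.

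Next I would apply $\Hom[{\rmod[\mathcal{A}]}]{-}{H}$ to the projective part of this resolution and invoke Yoneda's lemma, in the form of the natural isomorphism $\Hom[{\rmod[\mathcal{A}]}]{\mathcal{A}(-,B)}{H} \cong H(B)$, to identify the resulting cochain complex with
\[
  0 \longrightarrow H(A'') \xrightarrow{H(\alpha)} H(A) \xrightarrow{H(\alpha')} H(A') \longrightarrow 0,
\]
where $H(A')$ occupies cohomological degree two. Taking cohomology, $\Ext[{\rmod[\mathcal{A}]}]{2}{G}{H}$ is the cohomology group at the terminal entry $H(A')$; as there is no outgoing differential, this is exactly $\Coker{H(\alpha')}$, which is the asserted isomorphism.

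No serious obstacle arises. The only points that merit a word of care are the exactness of the representable sequence at $\mathcal{A}(-,A')$ and at the middle term — that is, that $\mathcal{A}(-,\alpha')$ is a monomorphism of functors and that $\alpha'$ is a pseudo-kernel of $\alpha$ inside $\mathcal{A}$ — but both are immediate from left-exactness of $\mathcal{M}(B,-)$ together with the fullness of $\mathcal{A}$ in $\mathcal{M}$, since $\alpha'$ is an honest kernel of $\alpha$ in $\mathcal{M}$ whose source already lies in $\mathcal{A}$. Thus the precovering hypothesis is needed only to guarantee that $\rmod[\mathcal{A}]$ is abelian, and no syzygy computation beyond the displayed resolution is required.
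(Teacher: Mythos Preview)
Your proof is correct and follows essentially the same approach as the paper: build the length-two projective resolution of $G$ from the given exact sequence via left exactness of Hom, then apply $\Hom[{\rmod[\mathcal{A}]}]{-}{H}$ and use Yoneda's lemma to identify the resulting cochain complex with $H(A'') \to H(A) \to H(A')$, whose second cohomology is $\Coker{H(\alpha')}$. Your additional remarks on fullness of $\mathcal{A}$ in $\mathcal{M}$ and on why the precovering hypothesis is only needed to make $\rmod[\mathcal{A}]$ abelian are accurate elaborations of points the paper leaves implicit.
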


\begin{proof}
  By the definition of $G$ and left exactness of the Hom functor, the chain complex
\begin{equation}
  \label{eq:proj-res}
  \xymatrix{
    0 \ar[r] & \mathcal{A}(-,A') \ar[r]^-{\mathcal{A}(-,\alpha')} & \mathcal{A}(-,A) \ar[r]^-{\mathcal{A}(-,\alpha)} & \mathcal{A}(-,A'') \ar[r] & 0
  },
\end{equation}
is a non-augmented projective resolution in $\rmod[\mathcal{A}]$ of $G$. To compute $\Ext[{\rmod[\mathcal{A}]}]{2}{G}{H}$ we must first apply the functor $(\rmod[\mathcal{A}])(\mspace{1.5mu}?\mspace{1.5mu},H)$ to \eqref{proj-res} and then take the second cohomology group of the resulting cochain complex. By Yoneda's lemma there is a natural isomorphism
\begin{displaymath}
  (\rmod[\mathcal{A}])(\mathcal{A}(-,B),H) \,\cong\, H(B)
\end{displaymath}
for any $B \in \mathcal{A}$; hence application of $(\rmod[\mathcal{A}])(\mspace{1.5mu}?\mspace{1.5mu},H)$ to \eqref{proj-res} yields the cochain complex
\begin{displaymath}
  \xymatrix@C1.7pc{
    0 \ar[r] & H(A'') \ar[r]^-{H(\alpha)} & H(A) \ar[r]^-{H(\alpha')} & H(A') \ar[r] & 0
  }.
\end{displaymath}
The second cohomology group of this cochain complex is $\Coker{H(\alpha')}$.
\end{proof}

Recall that a commutative ring is called a \emph{discrete valuation ring (DVR)} if it is a principal ideal domain with exactly one non-zero maximal ideal. There are of course many other equivalent characterizations of such rings.

\begin{thm}
  \label{thm:01}
  If $\,\gldim{(\MCM)} \leqslant 1$, then $R$ is regular. In particular, one has
\begin{align*}
  \gldim{(\MCM)} = 0 &\ \ \iff \ \ \text{$R$ is a field.} \\
  \gldim{(\MCM)} = 1 &\ \ \iff \ \ \text{$R$ is a discrete valuation ring.}
\end{align*}
\end{thm}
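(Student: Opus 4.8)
The plan is to prove the opening implication---that $\gldim{(\MCM)} \leqslant 1$ forces $R$ to be regular---and then to obtain the two biconditionals formally from \exaref{gldim-MCM-regular}. So suppose $\gldim{(\MCM)} \leqslant 1$. The lower bound in \thmref{gldim-MCM} gives $d \leqslant \gldim{(\MCM)} \leqslant 1$, hence $d \in \{0,1\}$. I will argue the implication by contraposition: assuming $R$ is \emph{not} regular (so that $\operatorname{pd}_R k = \infty$), I will exhibit a finitely presented right $\MCM$-module of projective dimension at least $2$, contradicting $\gldim{(\MCM)} \leqslant 1$.

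To build such a module I would start from a minimal free resolution $\cdots \to F_1 \to F_0 \to k \to 0$ of the residue field, with differential $\partial_1 \colon F_1 \to F_0$, and set $K = \Ker{\partial_1}$, the second syzygy of $k$. Splicing the presentations of $k$ and of its first syzygy yields an exact sequence
\begin{displaymath}
  0 \longrightarrow K \longrightarrow F_1 \stackrel{\partial_1}{\longrightarrow} F_0 \longrightarrow k \longrightarrow 0,
\end{displaymath}
in which $F_0,F_1$ are free. Since $2 \geqslant d$, \lemref{depth-2} shows that $K$ is maximal Cohen--Macaulay; thus the truncated left-exact sequence $0 \to K \to F_1 \to F_0$ has all three terms in $\MCM$. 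This places me exactly in the setting of \lemref{Ext2}, applied with $\mathcal{A} = \MCM$ inside $\mathcal{M} = \mod$ (legitimate because $\MCM$ is precovering in $\mod$ by \thmref{MCM-precover}): the finitely presented right $\MCM$-module $G = \Coker{(-,\partial_1)}$ then satisfies $\Ext[{\rmod[\MCM]}]{2}{G}{H} \cong \Coker{H(\alpha')}$ for every finitely presented right $\MCM$-module $H$, where $\alpha' \colon K \to F_1$ is the inclusion.

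The crux is to pick $H$ detecting non-regularity. I would take $H = (-,k)$, which is finitely presented by \prpref{pd-of-contravariant-Hom}; then $H(\alpha')$ is the restriction map $\Hom{F_1}{k} \to \Hom{K}{k}$. By minimality of the resolution, $K = \operatorname{im}\partial_2 \subseteq \mathfrak{m}F_1$, while any homomorphism $F_1 \to k$ sends $\mathfrak{m}F_1$ into $\mathfrak{m}k = 0$; hence $H(\alpha') = 0$ and $\Coker{H(\alpha')} = \Hom{K}{k}$. As $\operatorname{pd}_R k = \infty$ we have $K \neq 0$, so Nakayama's lemma gives $\Hom{K}{k} \neq 0$. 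Therefore $\Ext[{\rmod[\MCM]}]{2}{G}{H} \neq 0$, the projective dimension of $G$ is at least $2$, and $\gldim{(\MCM)} \geqslant 2$---the desired contradiction. The only delicate points are the verification that $K$ is maximal Cohen--Macaulay and the minimality bookkeeping showing $K \subseteq \mathfrak{m}F_1$; both are routine, and I expect no serious obstacle, the real content being the choice of the second syzygy together with the test module $(-,k)$.

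Finally I would deduce the biconditionals. Once $R$ is known to be regular, \exaref{gldim-MCM-regular} gives $\gldim{(\MCM)} = d$; hence $\gldim{(\MCM)} = 0$ (resp.\ $=1$) forces $R$ regular of dimension $0$ (resp.\ $1$), and conversely such a ring realizes the value. Since a regular local ring of dimension $0$ is precisely a field and one of dimension $1$ is precisely a discrete valuation ring, these identifications together with the equality $\gldim{(\MCM)} = d$ in the regular case yield exactly the two stated equivalences.
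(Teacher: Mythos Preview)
Your argument is correct. Both proofs hinge on \lemref{Ext2}, but they feed it different data. The paper takes an arbitrary $X\in\MCM$, forms the short exact sequence $0\to Y\to L\to X\to 0$ with $L$ free, applies \lemref{Ext2} with the test module $H=(-,Y)$, and reads off that $\Hom{\iota}{Y}$ is surjective; the identity on $Y$ then lifts to a splitting, so every MCM module is projective and $R$ is regular. This argument is direct and dimension-free---it never invokes the lower bound $d\leqslant\gldim{(\MCM)}$. Your route instead first uses that bound to force $d\leqslant 1$, then works by contraposition: from a minimal free resolution of $k$ you extract the second syzygy $K$ (now MCM because $d\leqslant 1$), apply \lemref{Ext2} to $0\to K\to F_1\to F_0$ with the test module $H=(-,k)$, and exploit minimality to see that the restriction map $\Hom{F_1}{k}\to\Hom{K}{k}$ vanishes while its target does not. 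The paper's choice $H=(-,Y)$ buys a cleaner, splitting-style conclusion that avoids both the dimension reduction and the minimality bookkeeping; your choice $H=(-,k)$ makes the obstruction completely explicit as $\Hom{K}{k}$, at the cost of that extra preparation. Either way the biconditionals follow from \exaref{gldim-MCM-regular} exactly as you say.
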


\begin{proof}
Assume that $\gldim{(\MCM)} \leqslant 1$. Let $X$ be any maximal Cohen--Macaulay $R$-module and let $\pi \colon L \twoheadrightarrow X$ be an epimorphism where $L$ is finitely generated and free. Note that $Y = \Ker{\pi}$ is also maximal Cohen--Macaulay by \lemref{depth-2}, so we have an exact sequence,
\begin{displaymath}
  0 \longrightarrow Y \stackrel{\iota}{\longrightarrow} L \stackrel{\pi}{\longrightarrow} X \longrightarrow 0\;,
\end{displaymath}
of maximal Cohen--Macaulay $R$-modules. With $G = \Coker{(-,\pi)}$ and $H = (-,Y)$ we have
\begin{displaymath}
  \Coker{\mspace{1mu}(\iota,Y)} \,\cong\, \Ext[{\rmod[\MCM]}]{2}{G}{H} \,\cong\, 0 \;;
\end{displaymath} 
here the first isomorphism comes from \lemref{Ext2}, and the second isomorphism follows from the assumption that $\gldim{(\MCM)} \leqslant 1$. Hence the homomorphism
\begin{displaymath}
  \xymatrix@C3.5pc{
    \Hom{L}{Y} \ar[r]^-{\Hom{\iota}{Y}} & \Hom{Y}{Y}
  }
\end{displaymath} 
is surjective. Thus $\iota$ has a left inverse and $X$ becomes a direct summand of the free module $L$. Therefore every maximal Cohen--Macaulay $R$-module is projective, so $R$ is regular.

The displayed equivalences now follows in view of \exaref{gldim-MCM-regular} and the fact that a regular local ring has Krull dimension $0$, respectively, $1$, if and only if it is a field, respectively, a discrete valuation ring.
\end{proof}

As as corollary, we get the following addendum to Leuschke's theorem (see   \corref[]{Leuschke}).

\begin{cor}
  Assume that $R$ has finite CM-type and let $X$ be any representation generator of $\,\MCM$ with Auslander algebra $E = \mathrm{End}_R(X)$. If $\gldim{E} \leqslant 1$, then $R$ is regular. \qed
\end{cor}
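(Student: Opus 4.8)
The plan is to reduce this statement to an application of \thmref{01}, using the identification of global dimensions furnished by \exaref{gldim-MCM}. The key observation is that, under the standing hypothesis that $R$ has finite CM-type, the global dimension of the functor category $\MCM$ agrees with the ordinary global dimension of the Auslander algebra: one has $\gldim{(\MCM)} = \gldim{E}$. First I would recall why this equality holds. Example~\ref{exa:finite-CM-type} provides Auslander's projectivization equivalence $\MCM = \add{R}{X} \simeq \rproj[E]$, and combining this with \obsref{six-equivalences} translates $\lmod[\MCM]$ into $\lmod[E]$; passing to global dimensions gives exactly $\gldim{(\MCM)} = \gldim{E}$, as recorded in \exaref{gldim-MCM}.

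With this bridge in place, the hypothesis $\gldim{E} \leqslant 1$ transfers immediately to $\gldim{(\MCM)} \leqslant 1$. At this point \thmref{01} applies directly and yields that $R$ is regular, which is the desired conclusion.

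I expect no genuine obstacle here, as the statement is a formal corollary: the only substantive content is the translation $\gldim{(\MCM)} = \gldim{E}$, and that has already been established in \exaref{gldim-MCM}. The remaining step is nothing more than invoking \thmref{01} on the functor category, whose regularity criterion was proved without any finiteness assumption on the CM-type. Thus the finite CM-type case follows as a special instance, reproving and sharpening the relevant half of Leuschke's theorem.
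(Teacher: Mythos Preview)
Your proposal is correct and matches the paper's own reasoning exactly: the corollary is stated with a \qed and no explicit proof, because it follows immediately from \exaref{gldim-MCM} (giving $\gldim{(\MCM)} = \gldim{E}$) together with \thmref{01}. Your write-up simply spells out this implicit argument.
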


\def\cprime{$'$}
  \providecommand{\arxiv}[2][AC]{\mbox{\href{http://arxiv.org/abs/#2}{\sf
  arXiv:#2 [math.#1]}}}
  \providecommand{\oldarxiv}[2][AC]{\mbox{\href{http://arxiv.org/abs/math/#2}{\sf
  arXiv:math/#2
  [math.#1]}}}\providecommand{\MR}[1]{\mbox{\href{http://www.ams.org/mathscinet-getitem?mr=#1}{#1}}}
  \renewcommand{\MR}[1]{\mbox{\href{http://www.ams.org/mathscinet-getitem?mr=#1}{#1}}}
\providecommand{\bysame}{\leavevmode\hbox to3em{\hrulefill}\thinspace}
\providecommand{\MR}{\relax\ifhmode\unskip\space\fi MR }
\providecommand{\MRhref}[2]{%
  \href{http://www.ams.org/mathscinet-getitem?mr=#1}{#2}
}
\providecommand{\href}[2]{#2}

\end{document}